\newtheorem{theorem}{Theorem}[section]
\newtheorem{lemma}[theorem]{Lemma}
\newtheorem{definition}[theorem]{Definition}
\newtheorem{proposition}[theorem]{Proposition}
\newtheorem{corollary}[theorem]{Corollary}
\theoremstyle{definition}
\newenvironment{proof}[1][Proof]
{\par\noindent\textit{#1.}~}
{\hspace*{\fill}$\Box$\par}
\newcommand{\esssup}{\mathop{\mathrm{esssup}}}
\begin{document}
	
	\begin{frontmatter}
		
		\title{Linear-Quadratic Zero-Sum Stochastic Differential Game with Partial Observation\thanksref{footnoteinfo}} 
		
		\thanks[footnoteinfo]{This paper was not presented at any conference. Corresponding author: Wanying Yue.}
		
		\author[ZYY]{Zhiyong Yu}\ead{yuzhiyong@sdu.edu.cn},    
		\author[WYY]{Wanying Yue}\ead{yuewanying@mail.sdu.edu.cn}
		
		\address[ZYY]{School of Mathematics, Shandong University, Jinan 250100, China}  
		\address[WYY]{School of Mathematics, Shandong University, Jinan 250100, China}

\begin{keyword}   
Stochastic linear-quadratic problem, stochastic differential game, incomplete information, stochastic differential equation, completion of squares
\end{keyword}

\begin{abstract} 
This paper is concerned with a kind of linear-quadratic (LQ, for short) two-person zero-sum stochastic differential game problems with partial observation. We propose the notions of explicit and implicit feedback laws under partial observation. With the help of a class of conditional mean-field stochastic differential equations (CMF-SDEs, for short), the separation principle, filtering techniques, and the method of completion of squares, we construct a saddle point in the form of feedback laws for the two players. Finally, the theoretical results are applied to investigate a duopoly competition problem with partial observation.    
\end{abstract}
		
\end{frontmatter}

\section{Introduction}\label{Sec:Int}

Two-person zero-sum stochastic differential games can be seen as  decision-making problems between two opposing players in stochastic environments. The excellent mathematical structure and accurate formulation of bargaining between buyers and sellers, competition between duopoly enterprises, and other behaviors in real world endow them with both theoretical and practical significance. The non-anticipative strategy proposed by Elliott and Kalton \cite{EK-72} to characterize the influence of the opponent's actions on a player's decision is one of the important concepts in theoretical research. Based on it, Fleming and Souganidis \cite{FS-89} proved that the upper and lower value functions of a zero-sum stochastic differential game satisfy the relevant Hamilton-Jacobi-Bellman-Isaacs equations as viscosity solutions (see Crandall et al. \cite{CIL-92} for example). Later, Buckdahn and Li \cite{BL-08} further developed the work of \cite{FS-89} to the case of stochastic recursive utility. Inspired by the non-anticipative strategy, Yu \cite{Y-15} and Tian et al. \cite{TYZh-20} introduced the notions of explicit and implicit feedback laws (referred to as explicit and implicit strategy laws in \cite{TYZh-20}). In the direction of the LQ problem, they constructed saddle-point strategies in the form of feedback laws for players with the help of the associated Riccati equation.

We note that the aforementioned works are all under the assumption of complete information, that is, all the details of the systems can be completely observed by the players. This assumption is too idealistic and rarely exists in the real world. 
Therefore, stochastic differential games with partial observations are of great practical significance.

Motivated by this, we study a class of partially observed LQ zero-sum stochastic differential game problems in this paper. Specifically, find a saddle point which is in the form of feedback laws and characterizes the trade-off between Player 1 maximizing and Player 2 simultaneously minimizing the following objective functional:
\begin{equation}\label{J}
\begin{aligned}
& J(u(\cdot)) := \frac 1 2 \mathbb E \Big\{ \big\langle Gx^u(T),\, x^u(T) \big\rangle +2\langle g,\, x^u(T) \rangle \\
& \quad +\int_0^T \Bigg[ \left\langle \begin{pmatrix} Q(t) & S(t)^\top \\ S(t) & R(t) \end{pmatrix} \begin{pmatrix} x^u(t) \\ u(t) \end{pmatrix},\, \begin{pmatrix} x^u(t) \\ u(t) \end{pmatrix} \right\rangle   \\
& \quad +2 \left\langle \begin{pmatrix} q(t) \\ \rho(t) \end{pmatrix},\, \begin{pmatrix} x^u(t) \\ u(t) \end{pmatrix} \right\rangle \Bigg]\, \mathrm dt \Bigg\}, 
\end{aligned}
\end{equation}
with $u(\cdot) =(u_1(\cdot)^\top, u_2(\cdot)^\top)^\top$, where the superscript $\top$ represents the transpose of a vector (or a matrix), 
and the notations
\[
\begin{aligned}
& S(\cdot) = \begin{pmatrix} S_1(\cdot) \\ S_2(\cdot) \end{pmatrix}, \quad
R(\cdot) = \begin{pmatrix} R_{11}(\cdot) & R_{21}(\cdot)^\top \\ R_{21}(\cdot) & R_{22}(\cdot) \end{pmatrix}, \\
& \rho(\cdot) = \begin{pmatrix} \rho_1(\cdot) \\ \rho_2(\cdot) \end{pmatrix},
\end{aligned}
\]
subject to the state equation:
\begin{equation}\label{Sys}
\left\{
\begin{aligned}
& \mathrm d x^u(t) = \big[ A(t)x^u(t) +B(t)u(t) +b(t) \big]\, \mathrm dt\\
& \qquad\qquad +C(t)\, \mathrm dW(t) +\bar C(t)\, \mathrm d\bar W(t),\\
& x^u(0) =a
\end{aligned}
\right.
\end{equation}
with the decomposition $B(\cdot) =(B_1(\cdot), B_2(\cdot))$ corresponding to that of $u(\cdot)$, and the observation equation:
\begin{equation}\label{Obs}
\left\{
\begin{aligned}
& \mathrm dy^u(t) = \big[ H(t)x^u(t) +h(t) \big]\, \mathrm dt +K(t)\, \mathrm dW(t),\\
& y^u(0) =0.
\end{aligned}
\right.
\end{equation}
The detailed and rigorous formulation will be provided in Section \ref{Sec:Formu}. We may regard the current work as a development of \cite{TYZh-20,Y-15} with complete information to the case of partial observation.

We adopt the terminology of Wang et al. \cite{WWX-18} to refer to situations where complete information cannot be obtained as incomplete information situations. If the information that can be obtained comes from an observation equation similar to \eqref{Obs}, the corresponding control or game problem is referred to as partially observable. Taking the partially observable LQ zero-sum stochastic differential game problem considered in this paper as an example, the partially observed information is mathematically described by the filtration $\mathbb F^u := \{ \mathcal F^u_t \}_{t\in [0,T]}$, where
\begin{equation}\label{Int:Fu}
\mathcal F^u_t := \sigma \{ y^u(s),\ 0\leq s\leq t \}.
\end{equation}
From Equations \eqref{Sys}, \eqref{Obs} and \eqref{Int:Fu}, it can be seen that, on the one hand, the control $u(\cdot)$ should be chosen to be $\mathbb F^u$-progressively measurable; on the other hand, the definition of $\mathbb F^u$ depends on the control $u(\cdot)$. In other words, in partially observable problems, there exists a circular dependency between the selection of admissible controls and the information that can be obtained, which is an intrinsic difficulty in theoretical research. To address this difficulty, three main research methods have been developed.

The first method is to modify the mathematical model from partial observation to so-called partial information, i.e., the information that can be obtained is directly given by an abstract sub-filtration $\mathbb G$ that does not depend on the control $u(\cdot)$, thereby directly removing the difficulty of circular dependency. For the study on stochastic control and game problems with partial information, please refer to \cite{BO-07,HWW-16,M-09,NWY-22,WY-12}, etc.


In the second method, by employing Girsanov's theorem, the observation process is interpreted as a standard Brownian motion on a reference probability space with a new probability measure, thereby eliminating the circular dependency in that reference probability space. The advantage of this method is that it can be applied to handle some nonlinear problems, but the disadvantage is that the mathematical treatment of transferring the problem to the reference probability space requires some strong constraints. Specifically, for the LQ problem we are concerned with, it may be necessary to assume that the drift coefficient $H(\cdot)x^u(\cdot) +h(\cdot)$ in \eqref{Obs} is bounded, which is too strong to be readily accepted. Therefore, this method is not adopted in the present paper. For the research on the Girsanov transformation method for partially observed stochastic control and game problems, please refer to \cite{BEK-89,SC-16,T-98,XZhZh-19}, etc.


The third method is the state decomposition method for linear systems, which is also the method adopted in this paper. Specifically, inspired by Bensoussan \cite{B-18}, we restrict the admissible controls to be progressively measurable with respect to $\mathbb F^u$ and $\mathbb F^0$, where $\mathbb F^0$ is the filtration generated by the observation process $y^0(\cdot)$ when the control is zero (see \eqref{Int:Fu}). Thanks to the linearity of the state equation \eqref{Sys} and the observation equation \eqref{Obs}, we have $\mathbb F^u = \mathbb F^0$ (see Lemma \ref{LEM:Fu=F0}). Therefore, the study of the problem is shifted to under the filtration $\mathbb F^0$ (which does not depend on the selection of control) to eliminate the circular dependency. For the research on the state decomposition method for partially observed stochastic optimal control problems, please see \cite{WWX-15,WXX-17,WX-24,W-68,ZhXS-21}, etc.


Compared to control problems, there is very little research on the state decomposition method for partially observable stochastic differential game problems. To our knowledge, only the work of Wu and Zhuang \cite{WZh-18} is related to our research, where they have done some studies under one-dimensional and some other strong assumptions. The reason for this phenomenon may be the interplay between the control actions of players in game problems. This interplay is involved in the aforementioned difficulty of the control-observation circular dependency, making the problem even more perplexing. Specifically, 
%
the set of admissible controls jointly owned by the players cannot be represented as the Cartesian product of the individual players' admissible control sets under general partial observation settings (see the explanation following Definition \ref{Def:Add-Con} in Section \ref{Sec:Formu}). This point is different from the situations of complete information and partial information, and is not encountered in optimal control problems with partial observation. 

In response, for our LQ zero-sum problem, we propose a notion of feedback laws under partial observation, and by integrating filtering technique, the separation principle, the complication-of-squares method, and block-matrix diagonalization technique, conjecture candidate saddle points. Then, with the help of well-posedness result for a class of CMF-SDEs and measurability-analysis technique, we verify that the candidate saddle points are true saddle points. This approach effectively overcomes the new difficulty described in the previous paragraph.


Now, we would like to summarize the innovations and features of this paper as the following three points:
\begin{itemize}

\item The notions of explicit and implicit feedback laws under complete information in \cite{Y-15} and \cite{TYZh-20} are extended to the case of partial observation. Accordingly, a class of CMF-SDEs is introduced as a research tool for our partially observable problem.


\item The two-step completion-of-squares procedure introduced in \cite{Y-15} for studying LQ zero-sum differential games is consolidated into the single-step completion-of-squares method of classical LQ optimal control, together with the block-matrix diagonalization technique.


\item For the partially observable LQ zero-sum stochastic differential game problem considered in this paper, with the help of a Riccati equation and an ordinary differential equation (ODE, for short), we provide a saddle point in the form of feedback laws for the two players. We think that this form of saddle point is appealing in practice (see the explanation following Definition \ref{Def:Sad-P}).
\end{itemize}

The rest of this paper is organized as follows. In Section \ref{Sec:Spa}, some spaces that will be used later are presented. In Section \ref{Sec:Formu}, we provide a detailed and rigorous formulation for the LQ zero-sum stochastic differential game problem with partial observation. In Section \ref{Sec:F-S}, we apply the Kalman-Bucy filter and the Wonham type separation principle to transform the problem with partial observation into a problem with complete information. Unlike \cite{Y-15} and \cite{TYZh-20}, in this problem with complete information, some additional non-homogeneous terms appear in the state equation and some additional linear terms appear in the objective functional, then the conclusions in \cite{Y-15} and \cite{TYZh-20} cannot be directly applied. In Section \ref{Sec:Com-Sad}, we first apply the method of completion of squares, with the help of a Riccati equation and an ODE, to provide candidate saddle points in the form of feedback laws. Then, using the theory of CMF-SDEs and filtering equations, we prove that the candidate saddle points are indeed true saddle points. In Section \ref{Sec:Duop}, as an application of the theoretical results, we solve an example of two-dimensional duopoly competition with partial observation and do some numerical simulations. Finally, we conclude this paper in Section \ref{Sec:Conclu}. 

\section{Spaces}\label{Sec:Spa}
	
	Let $\mathbb R^n$ be the $n$-dimensional Euclidean space equipped with the  Euclidean inner product $\langle \cdot,\, \cdot \rangle$. The induced norm is denoted by $|\cdot|$. Let $\mathbb R^{n\times m}$ be the collection of all $(n\times m)$ matrices and $\mathbb S^n \subset \mathbb R^{n\times n}$ consist of all $(n\times n)$ symmetric matrices. Clearly, both $\mathbb R^{n\times m}$ and $\mathbb S^n$ are Euclidean spaces.

	Let $T>0$ be a fixed time horizon. Let $(\Omega,\mathcal F, \mathbb F, \mathbb P)$ be a complete filtered probability space on which it is defined a $(d+\bar d)$-dimensional standard Brownian motion $(W(\cdot)^\top, \bar W(\cdot)^\top)^\top$ with $W(\cdot) =(W_1(\cdot),W_2(\cdot),\dots,W_d(\cdot))^\top$ and $\bar W(\cdot) =(\bar W_1(\cdot), \bar W_2(\cdot), \dots, \bar W_{\bar d}(\cdot))^\top$. The filtration $\mathbb F =\{ \mathcal F_t \}_{t\in [0,T]}$ is the natural one generated by $W(\cdot)$ and $\bar W(\cdot)$ and augmented by all $\mathbb P$-null sets. Let $\mathcal F = \mathcal F_T$.

	We continue to introduce some Banach (sometimes more precisely, Hilbert) spaces of random variables, deterministic processes, or stochastic processes as follows:
	\begin{itemize}
		\item $L^2_{\mathcal F_t}(\Omega;\mathbb R^n)$ is the set of $\mathcal F_t$-measurable random variables $\xi:\Omega\rightarrow \mathbb R^n$ such that $\mathbb E[|\xi|^2]<\infty$.
		
		\item $L^p(0,T;\mathbb R^n)$ (with $p=1,2$) is the set of Lebesgue measurable deterministic processes $f:[0,T] \rightarrow \mathbb R^n$ such that $\int_0^T |f(t)|^p\, \mathrm dt <\infty$.
		When $p=1$, we denote by $L(0,T;\mathbb R^n) := L^1(0,T;\mathbb R^n)$.

		\item $L^\infty(0,T;\mathbb R^n) \subset L^2(0,T;\mathbb R^n)$ consists of all elements such that $\esssup_{t\in [0,T]} |f(t)| <\infty$.

\item $L^2_{\mathbb F}(0,T;\mathbb R^n)$ is the set of $\mathbb F$-progressively measurable stochastic processes $f:\Omega \times [0,T] \rightarrow \mathbb R^n$ such that $\mathbb E \int_0^T |f(t)|^2\, \mathrm dt <\infty$.
		
		
		\item $L^2_{\mathbb F}(\Omega;C(0,T;\mathbb R^n)) \subset L^2_{\mathbb F}(0,T;\mathbb R^n)$ consists of all processes such that for almost all $\omega \in \Omega$, $t\mapsto f(\omega,t)$ is continuous and $\mathbb E[ \sup_{t\in [0,T]} |f(t)|^2 ]<\infty$.
	\end{itemize}

	\section{Problem formulation}\label{Sec:Formu}
	
In this section, we provide a detailed and rigorous formulation for the LQ zero-sum stochastic differential game problem with partial observation. Firstly, for the game system \eqref{Sys}, we assume that $a \in \mathbb R^n$, $A(\cdot) \in L^\infty(0,T;\mathbb R^{n\times n})$, $B_i(\cdot) \in L^\infty(0,T;\mathbb R^{n\times k_i})$ ($i=1,2$), $b(\cdot) \in L(0,T;\mathbb R^n)$, $C(\cdot) \in L^\infty(0,T;\mathbb R^{n\times d})$ and $\bar C(\cdot) \in L^\infty(0,T;\mathbb R^{n\times \bar d})$. Then, the basic well-posedness result of stochastic differential equations (SDEs, for short) implies that, for any $u_1(\cdot) \in L^2_{\mathbb F}(0,T;\mathbb R^{k_1})$ and any $u_2(\cdot) \in L^2_{\mathbb F}(0,T;\mathbb R^{k_2})$, SDE \eqref{Sys} admits a unique solution $x^u(\cdot) \in L^2_{\mathbb F}(\Omega;C(0,T;\mathbb R^n))$. In the terminology of control and game theory, $u_i(\cdot)$ is referred to as the control process of Player $i$ ($i=1,2$), and $x^u(\cdot)$ is referred to as the corresponding state process.

Secondly, for the objective functional \eqref{J}, we let $G \in \mathbb S^n$, $g\in \mathbb R^n$, $Q(\cdot) \in L^\infty(0,T;\mathbb S^n)$, $q(\cdot) \in L(0,T;\mathbb R^n)$, $S_i(\cdot) \in L^\infty(0,T;\mathbb R^{k_i \times n})$, $R_{ii}(\cdot) \in L^\infty(0,T;\mathbb S^{k_i})$, $R_{21}(\cdot) \in L^\infty(0,T;\mathbb R^{k_2\times k_1})$ and $\rho_i(\cdot) \in L^2(0,T;\mathbb R^{k_i})$ ($i=1,2$). Clearly, with this setting, the objective functional \eqref{J} is well-defined, i.e., $|J(u(\cdot))| <\infty$ for any $u(\cdot) \in L^2_{\mathbb F}(0,T;\mathbb R^k)$ where $k:=k_1+k_2$. In our zero-sum game, Player $1$ aims to maximize the objective functional by choosing controls; conversely, Player $2$ aims to minimize the objective functional. The objective functional can be viewed as the payoff for Player $1$ and the cost for Player $2$.

	
In numerous practical problems, the state process of a game system cannot be fully observed, while what can often be observed is a part of the state process or its transformation, which is referred to as the observation process. In this paper, the observation process, denoted as $y^u(\cdot)$, is formulated to obey the SDE \eqref{Obs}. Now, we assume that $H(\cdot) \in L^\infty(0,T;\mathbb R^{d\times n})$, $h(\cdot) \in L(0,T;\mathbb R^d)$ and $K(\cdot) \in L^\infty(0,T;\mathbb R^{d\times d})$. Clearly, since $x^u(\cdot) \in L^2_{\mathbb F}(\Omega; C(0,T;\mathbb R^n))$, SDE \eqref{Obs} also admits a unique solution $y^u(\cdot) \in L^2_{\mathbb F}(\Omega; C(0,T;\mathbb R^d))$.

For any $u(\cdot) \in L^2_{\mathbb F}(0,T;\mathbb R^k)$, let $\mathbb F^u := \{ \mathcal F^u_t \}_{t\in [0,T]}$ be the filtration generated by the observation process $y^u(\cdot)$, where $\mathcal F^u_t$ is defined by \eqref{Int:Fu}. From the viewpoint of practical application, the {\it admissible} controls for players should naturally be $\mathbb F^u$-progressively measurable, i.e., it is reasonable to choose the following set as the admissible control set:
	\begin{equation}\label{Add:bar}
		\begin{aligned}
			\bar{\mathscr U}[0,T] := \big\{ u(\cdot) \in L^2_{\mathbb F}(0,T;\mathbb R^k) \,\big|\, u(\cdot) \mbox{ is } 
			\mathbb F^u\mbox{-}\\
			\mbox{progressively measurable} \big\}.
		\end{aligned}
	\end{equation}
	However, inspired by the stochastic control problem with partial observation (see \cite{B-18,WWX-15} for example), the definition of admissible control mentioned above will bring about the so-called ``control-observation circular dependency", which is an intrinsic difficulty in the subsequent mathematical treatment. To overcome this difficulty, we adopt Bensoussan's method \cite{B-18} and modify \eqref{Add:bar} to
	\begin{equation}\label{Add:Con}
		\begin{aligned}
			\mathscr U[0,T] := \big\{ u(\cdot) \in L^2_{\mathbb F}(0,T;\mathbb R^k) \,\big|\, u(\cdot) \mbox{ is } 
			\mathbb F^u\mbox{-}\\
			\mbox{and } \mathbb F^0\mbox{-progressively measurable} \big\}.
		\end{aligned}
	\end{equation}
	Here, we note that $\mathbb F^0$ is the filtration generated by the observation process $y^0(\cdot)$ when $u(\cdot) =0$. Specifically, we have

	\begin{definition}\label{Def:Add-Con}
		The set $\mathscr U[0,T]$ defined by \eqref{Add:Con} is called the admissible control set for the players. Each element of $\mathscr U[0,T]$ is referred to as an admissible control of players. 
	\end{definition}
	
We note that the set of admissible controls $\mathscr U[0,T]$ is jointly owned by the two players. The set of admissible controls for each individual player should naturally be taken as the two cross-sections of $\mathscr U[0,T]$. In detail, the set of admissible controls for Player $1$ is defined as
\begin{equation}
\begin{aligned}
\mathscr U_1[u_2(\cdot); 0,T] := \big\{ u_1(\cdot) \in L^2_{\mathbb F^0}(0,T;\mathbb R^{k_1}) \,\big|\\
(u_1(\cdot),u_2(\cdot)) \in \mathscr U[0,T] \big\}
\end{aligned}
\end{equation}
for any given $u_2(\cdot) \in L^2_{\mathbb F^0}(0,T;\mathbb R^{k_2})$ such that there exists a $\bar u_1(\cdot) \in L^2_{\mathbb F^0}(0,T;\mathbb R^{k_1})$ satisfying $(\bar u_1(\cdot), u_2(\cdot)) \in \mathscr U[0,T]$. The set of admissible controls for Player $2$ is defined similarly. However, the measurability of the admissible controls for each individual player depends on the choice of the opponent's admissible controls (see \eqref{Add:Con}). That is, in general, for different $u_{3-i}(\cdot)$ and $u_{3-i}'(\cdot)$,
\[
\mathscr U_i[u_{3-i}(\cdot);0,T] \neq \mathscr U_i[u_{3-i}'(\cdot);0,T],\quad i=1,2.
\]
Hence, unlike the case of complete information, $\mathscr U[0,T]$ cannot be represented as the Cartesian product of the two cross-sections.
	
Now, we use the linearity of the game system \eqref{Sys} and the observation equation \eqref{Obs} to completely transfer the progressive measurability of admissible control to $\mathbb F^0$. In fact, by letting $u(\cdot) \in L^2_{\mathbb F}(0,T;\mathbb R^k)$, the unique solutions $x^u(\cdot)$ and $y^u(\cdot)$ to \eqref{Sys} and \eqref{Obs} admit the following decomposition:
\begin{equation}\label{u=0+1}
x^u(\cdot) = x^0(\cdot) +\xi^u(\cdot),\quad y^u(\cdot) =y^0(\cdot) +\eta^u(\cdot),
\end{equation}
where $x^0(\cdot)$ and $y^0(\cdot)$ are the unique solutions to SDEs \eqref{Sys} and \eqref{Obs} with $u(\cdot) =0$ respectively, $\xi^u(\cdot)$ and $\eta^u(\cdot)$ are the unique solutions to the following ODEs:
	\begin{equation}\label{ODE:x1}
		\left\{
		\begin{aligned}
			& \dot \xi^u(t) =A(t)\xi^u(t) +B(t)u(t),\\
			& \xi^u(0) =0
		\end{aligned}
		\right.
	\end{equation}
	and
	\begin{equation}\label{ODE:y1}
		\left\{
		\begin{aligned}
			& \dot \eta^u(t) = H(t)\xi^u(t),\\
			& \eta^u(0) =0,
		\end{aligned}
		\right.
	\end{equation}
	respectively.

	\begin{lemma}\label{LEM:Fu=F0}
		Let $u(\cdot) \in \mathscr U[0,T]$. Then $\mathbb F^u = \mathbb F^0$.
	\end{lemma}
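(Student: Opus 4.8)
The plan is to exploit the decomposition \eqref{u=0+1} together with the fact that, for $u(\cdot) \in \mathscr U[0,T]$, the control is $\mathbb F^0$-progressively measurable. First I would observe that the inclusion $\mathbb F^0 \subseteq \mathbb F^u$ is the easy direction: since $y^0(\cdot) = y^u(\cdot) - \eta^u(\cdot)$ and $\eta^u(\cdot)$ solves the ODE \eqref{ODE:y1} driven by $\xi^u(\cdot)$, which in turn solves \eqref{ODE:x1} driven by $u(\cdot)$, the process $\eta^u(\cdot)$ is a deterministic (pathwise) functional of the trajectory $u(\cdot)$; as $u(\cdot)$ is $\mathbb F^0$-progressively measurable, so are $\xi^u(\cdot)$ and $\eta^u(\cdot)$, hence for each $t$ the random variable $y^0(s) = y^u(s) - \eta^u(s)$ is $\mathcal F^u_t$-measurable for all $s \le t$, giving $\mathcal F^0_t \subseteq \mathcal F^u_t$.

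Next I would establish the reverse inclusion $\mathbb F^u \subseteq \mathbb F^0$. Here the point is that $u(\cdot) \in \mathscr U[0,T]$ is by definition $\mathbb F^0$-progressively measurable, so running the ODEs \eqref{ODE:x1}–\eqref{ODE:y1} forward we again get that $\xi^u(\cdot)$ and $\eta^u(\cdot)$ are $\mathbb F^0$-progressively measurable; then $y^u(s) = y^0(s) + \eta^u(s)$ is $\mathcal F^0_t$-measurable for every $s \le t$, whence $\mathcal F^u_t \subseteq \mathcal F^0_t$. Combining the two inclusions (and using that both filtrations are augmented by the $\mathbb P$-null sets in the usual way, or taking the augmentation as part of the definition) yields $\mathbb F^u = \mathbb F^0$.

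The one genuine technical point — and the step I expect to require the most care — is the measurability claim that $\eta^u(\cdot)$ is progressively measurable with respect to whichever filtration $u(\cdot)$ is progressively measurable with respect to. This is not merely "the solution of a linear ODE is a continuous function of the data"; one needs that the solution map $u(\cdot) \mapsto \big(\xi^u(\cdot), \eta^u(\cdot)\big)$, realised pathwise via the variation-of-constants formula $\xi^u(t) = \int_0^t \Phi(t)\Phi(s)^{-1} B(s) u(s)\, \mathrm ds$ (with $\Phi$ the fundamental matrix of $\dot\Phi = A\Phi$, $\Phi(0)=I$) and $\eta^u(t) = \int_0^t H(s)\xi^u(s)\,\mathrm ds$, carries progressive measurability of $u(\cdot)$ over to $\xi^u(\cdot)$ and $\eta^u(\cdot)$. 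This follows from Fubini-type arguments for progressively measurable integrands, using that $A, B, H$ are (deterministic and) bounded measurable and that $\xi^u(\cdot), \eta^u(\cdot)$ have continuous sample paths, but it should be stated explicitly since it is exactly the mechanism that breaks the circular dependency between the control and the observation filtration.

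Finally, I would note that no stochastic-analysis subtlety beyond this is needed: the decomposition \eqref{u=0+1} reduces everything to pathwise ODE manipulations, and the Brownian part $y^0(\cdot)$ is common to both $y^u(\cdot)$ and $y^0(\cdot)$, so no Girsanov change of measure or filtering argument enters at this stage. The lemma is essentially the rigorous justification of the informal claim made in the introduction that restricting $\mathscr U[0,T]$ to controls that are additionally $\mathbb F^0$-progressively measurable makes the observation filtration control-independent.
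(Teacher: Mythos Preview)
The paper does not supply its own proof of this lemma; it simply cites Bensoussan \cite[Section 9.1]{B-18} and Wang et al.\ \cite[Lemma 2.1]{WWX-15}. Your overall strategy---use the decomposition $y^u = y^0 + \eta^u$ and transfer progressive measurability through the linear ODEs \eqref{ODE:x1}--\eqref{ODE:y1}---is exactly the standard argument in those references, and your attention to the variation-of-constants formula and the Fubini-type measurability step is appropriate.

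There is, however, a slip in your treatment of the inclusion $\mathbb F^0 \subseteq \mathbb F^u$. You write that ``$u(\cdot)$ is $\mathbb F^0$-progressively measurable, so are $\xi^u(\cdot)$ and $\eta^u(\cdot)$, hence \ldots\ $y^0(s) = y^u(s) - \eta^u(s)$ is $\mathcal F^u_t$-measurable.'' But knowing $\eta^u(s)$ is $\mathcal F^0_s$-measurable tells you nothing about $\mathcal F^u_t$-measurability unless you already have $\mathcal F^0_s \subseteq \mathcal F^u_t$, which is precisely what you are trying to prove. The correct input here is the \emph{other} half of the definition of $\mathscr U[0,T]$ in \eqref{Add:Con}: $u(\cdot)$ is also $\mathbb F^u$-progressively measurable, hence so are $\xi^u(\cdot)$ and $\eta^u(\cdot)$, and then $y^0(s) = y^u(s) - \eta^u(s)$ is indeed $\mathcal F^u_t$-measurable for $s\le t$. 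Symmetrically, for $\mathbb F^u \subseteq \mathbb F^0$ you correctly invoke the $\mathbb F^0$-progressive measurability of $u(\cdot)$. So the two inclusions use the two different measurability hypotheses built into $\mathscr U[0,T]$, and this is in fact the whole point of Bensoussan's device \eqref{Add:Con}. Once you swap in the right filtration for that direction, your proof is complete and matches the cited literature.
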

	
The proof of the lemma can be found in the literature, see Bensoussan \cite[Section 9.1]{B-18} and Wang et al. \cite[Lemma 2.1]{WWX-15} for example.
	


	\begin{definition}\label{Def:law}
		(i) A Borel measurable mapping $\varphi_1:[0,T] \times \mathbb R^n \times \mathbb R^{k_2} \rightarrow \mathbb R^{k_1}$ is called an explicit feedback law for Player $1$ (with partial observation) if the following conditions hold:
		
		(a) For any $u_2(\cdot) \in L^2_{\mathbb F^0}(0,T;\mathbb R^{k_2})$, the CMF-SDE
		\begin{equation}\label{x:varphi1u2}
			\left\{
			\begin{aligned}
				& \mathrm dx^{\varphi_1,u_2}(t) = \Big[ A(t)x^{\varphi_1,u_2}(t) +B_1(t) u_1^{\varphi_1,u_2}(t)\\
				& \qquad\qquad  +B_2(t)u_2(t) +b(t) \Big]\, \mathrm dt\\
				& \qquad\qquad  +C(t)\, \mathrm dW(t) +\bar C(t)\, \mathrm d\bar W(t),\\
				& x^{\varphi_1,u_2}(0) =a,
			\end{aligned}
			\right.
		\end{equation}
with 
\begin{equation}\label{u1:varphi1u2}
u_1^{\varphi_1,u_2}(t) := \varphi_1 \Big( t, \mathbb E\big[ x^{\varphi_1,u_2}(t)\, \big|\, \mathcal F^0_t \big], u_2(t) \Big),
\end{equation}
admits a unique solution $x^{\varphi_1, u_2}(\cdot) \in L^2_{\mathbb F}(\Omega;C(0,T;\mathbb R^n))$. 
		
		(b) The set 
		\begin{equation}
			\begin{aligned}
				\mathscr U_2 [\varphi_1;0,T] := \big\{ u_2(\cdot) \in L^2_{\mathbb F^0}(0,T;\mathbb R^{k_2})\, \big|\\
				(u_1^{\varphi_1,u_2}(\cdot), u_2(\cdot)) \in \mathscr U[0,T] \big\}
			\end{aligned}
		\end{equation}
		is non-empty, where $u_1^{\varphi_1,u_2}(\cdot)$ is given by \eqref{u1:varphi1u2}.
		
		In this case, $u_1^{\varphi_1,u_2}(\cdot)$ is called the induced control of $(\varphi_1, u_2)$ for Player $1$. An explicit feedback law for Player $2$ and the related induced controls can be defined in the same way.

		(ii) A Borel measurable mapping $\psi_2: [0,T] \times \mathbb R^n \rightarrow \mathbb R^{k_2}$ is called an implicit feedback law for Player $2$ (with partial observation) if the following conditions hold:
		
		(a) For any $u_1(\cdot) \in L^2_{\mathbb F^0}(0,T;\mathbb R^{k_1})$, the CMF-SDE
		\begin{equation}
			\left\{
			\begin{aligned}
				& \mathrm dx^{u_1,\psi_2}(t) = \Big[ A(t)x^{u_1,\psi_2}(t) +B_1(t)u_1(t)\\
				& \qquad \qquad +B_2(t)u_2^{u_1,\psi_2}(t) +b(t) \Big]\, \mathrm dt\\
				& \qquad \qquad +C(t)\, \mathrm dW(t) +\bar C(t)\, \mathrm d\bar W(t),\\
				& x^{u_1,\psi_2}(0) =a,
			\end{aligned}
			\right.
		\end{equation}
with
\begin{equation}\label{u2:u1psi2}
u_2^{u_1,\psi_2}(t) := \psi_2 \Big( t, \mathbb E\big[ x^{u_1,\psi_2}(t)\, \big|\, \mathcal F^0_t \big] \Big),
\end{equation}
admits a unique solution $x^{u_1,\psi_2}(\cdot) \in L^2_{\mathbb F}(\Omega;C(0,T;\mathbb R^n))$. 
		
		(b) The set 
		\begin{equation}
			\begin{aligned}
				\mathscr U_1[\psi_2;0,T] := \big\{ u_1(\cdot) \in L^2_{\mathbb F^0}(0,T;\mathbb R^{k_1})\, \big|\\
				(u_1(\cdot), u_2^{u_1,\psi_2}(\cdot)) \in \mathscr U[0,T] \big\}
			\end{aligned}
		\end{equation}
		is non-empty, where $u_2^{u_1,\psi_2}(\cdot)$ is given by \eqref{u2:u1psi2}.
		
		In this case, $u_2^{u_1,\psi_2}(\cdot)$ is called the induced control of $(u_1,\psi_2)$ for Player $2$. An implicit feedback law for Player $1$ and the related induced controls can be defined in the same way.
		
		(iii) Let $\varphi_1$ be an explicit feedback law of Player $1$ and $\psi_2$ an implicit feedback law of Player $2$. The $(\varphi_1,\psi_2)$ is called a pair of explicit-implicit feedback laws (with partial observation) if the following conditions also hold:
		
		(a) The CMF-SDE
		\begin{equation}\label{x:varphi1psi2}
			\left\{
			\begin{aligned}
				& \mathrm dx^{\varphi_1,\psi_2}(t) = \Big[ A(t)x^{\varphi_1,\psi_2}(t) +B_1(t)u_1^{\varphi_1,\psi_2}(t)\\
				& \qquad \qquad +B_2(t)u_2^{\varphi_1,\psi_2}(t) +b(t) \Big]\, \mathrm dt\\
				& \qquad \qquad +C(t)\, \mathrm dW(t) +\bar C(t)\, \mathrm d\bar W(t),\\
				& x^{\varphi_1,\psi_2}(0) =a,
			\end{aligned}
			\right.
		\end{equation}
		with
		\begin{equation}\label{u:varphi1psi2}
			\begin{aligned}
				& u_1^{\varphi_1,\psi_2}(t) := \varphi_1\Big( t,\mathbb E\big[ x^{\varphi_1,\psi_2}(t)\, \big|\, \mathcal F^0_t \big], u_2^{\varphi_1,\psi_2}(t) \Big),\\
				& u_2^{\varphi_1,\psi_2}(t) := \psi_2\Big( t,\mathbb E\big[ x^{\varphi_1,\psi_2}(t)\, \big|\, \mathcal F^0_t \big] \Big),
			\end{aligned}
		\end{equation}
		admits a unique solution $x^{\varphi_1,\psi_2}(\cdot) \in L^2_{\mathbb F}(\Omega;C(0,T;\mathbb R^n))$.
		
		(b) $u^{\varphi_1,\psi_2}(\cdot):=(u_1^{\varphi_1,\psi_2}(\cdot), u_2^{\varphi_1,\psi_2}(\cdot))\in\mathscr U[0,T]$.
		
		In this case, $u^{\varphi_1,\psi_2}(\cdot)$ given by \eqref{u:varphi1psi2} is called the induced control of $(\varphi_1,\psi_2)$. A pair of implicit-explicit feedback laws and the related induced control can be defined in the same way.
	\end{definition}

In Definition \ref{Def:law}(i), the modifier ``explicit'' means that the mapping $\varphi_1$ depends on the variable $u_2$, thereby the dependence of the induced control $u_1^{\varphi_1,u_2}(\cdot)$ of Player $1$ on the control $u_2(\cdot)$ of Player $2$ is explicitly revealed by the defining equation \eqref{u1:varphi1u2}. In contrast, the modifier ``implicit'' in Definition \ref{Def:law}(ii) indicates that the mapping $\psi_2$ does not depend on the variable $u_1$, thereby the dependence of the induced control $u_2^{u_1,\psi_2}(\cdot)$ of Player $2$ on the control $u_1(\cdot)$ of Player $1$ is implicitly expressed by the defining equation \eqref{u2:u1psi2}, that is, $u_2^{u_1,\psi_2}(\cdot)$ implicitly depends on $u_1(\cdot)$ through the conditional expectation of the state $x^{u_1,\psi_2}(\cdot)$.

\begin{definition}\label{Def:Sad-P}
A pair of explicit-implicit feedback laws $(\varphi_1, \psi_2)$ is called a saddle point if
\begin{equation}\label{SP:EI}
\begin{aligned}
J\big( u^{\varphi_1,\psi_2}(\cdot) \big) 
=\ & \inf_{u_2(\cdot) \in \mathscr U_2 [\varphi_1;0,T]} J \big( u_1^{\varphi_1,u_2}(\cdot), u_2(\cdot) \big)\\
=\ & \sup_{u_1(\cdot) \in \mathscr U_1 [\psi_2;0,T]} J\big( u_1(\cdot), u_2^{u_1,\psi_2}(\cdot)\big).
\end{aligned}
\end{equation}
A saddle point in implicit-explicit feedback law form can be defined similarly. 
\end{definition}

These saddle points characterize a type of equilibrium: when the opponent plays according to her/his feedback law in the saddle point, the player's best choice is also to play according to her/his own feedback law in the saddle point; otherwise, the player will be punished, that is, her/his own ``payoff decreases'' or ``cost increases''. In practice, this may be interpreted as: both players in the game must follow their respective rules. If either player violates the rules, she/he will be punished. This ensures that both players act according to the rules, thus maintaining the entire dynamic game in a state of equilibrium.

	{\bf Problem (LQG)} Find saddle points in explicit-implicit/implicit-explicit feedback law form for the LQ game with partial observation.

	\section{Filtering and separation}\label{Sec:F-S}
	
	Inspired by Lemma \ref{LEM:Fu=F0}, we let $u(\cdot) \in L^2_{\mathbb F^0}(0,T;\mathbb R^k)$ in this section. For any $t\in [0,T]$, we introduce the following notations:
	\begin{equation}
		\widehat x^u(t) := \mathbb E\big[ x^u(t)\, \big|\, \mathcal F^0_t \big]  \ \ \mbox{and} \ \ 
		\widetilde x^u(t) := x^u(t) -\widehat x^u(t)
	\end{equation}
	which are respectively called the filtering estimate of $x^u(t)$ with respect to $\mathcal F^0_t$ and the related estimation error. Clearly, due to the decomposition \eqref{u=0+1}, we have
	\begin{equation}\label{hat:u=0+1}
		\widehat x^u(t) = \mathbb E\big[ x^0(t)\, \big|\, \mathcal F^0_t \big] +\xi^u(t) = \widehat x^0(t) +\xi^u(t)
	\end{equation}
	and
	\begin{equation}\label{tilde xu=x0}
		\widetilde x^u(t) = \big[ x^0(t) +\xi^u(t) \big] -\big[ \widehat x^0(t) +\xi^u(t) \big] = \widetilde x^0(t).
	\end{equation}
	We note that the last equation indicates that $\widetilde x^u(\cdot)$ is independent of the players' controls. Therefore, we drop the superscript and rewrite $\widetilde x^u(\cdot)$ as $\widetilde x(\cdot)$.
	
	
We introduce the following
	
	\noindent{\bf Assumption (H1)} The matrix-valued process $K(\cdot)^{-1}$ exists and is uniformly bounded on $[0,T]$.
	
Similar to Bensoussan \cite[Proposition 9.1]{B-18}, we have the following

\begin{proposition}\label{Prop:filter}
Let Assumption (H1) hold and $u(\cdot) \in L^2_{\mathbb F^0}(0,T;\mathbb R^k)$. Then, the filtering estimate $\widehat x^u(\cdot)$ satisfies the following SDE:
\begin{equation}\label{F:Sys}
\left\{
\begin{aligned}
& \mathrm d\widehat x^u(t) = \big[ A(t)\widehat x^u(t) +B(t)u(t) +b(t) \big]\, \mathrm dt\\
& \qquad \qquad +D(t)\, \mathrm d\widehat W(t),\\
& \widehat x^u(0) =a,
\end{aligned}
\right.
\end{equation}
where 
\begin{equation}\label{D}
D(\cdot) := C(\cdot) +\Sigma(\cdot)\big[ K(\cdot)^{-1}H(\cdot) \big]^\top,
\end{equation}
$\Sigma(\cdot) := \mathbb E[ \widetilde x(\cdot) \widetilde x(\cdot)^\top]$ is the covariance matrix process of $\widetilde x(\cdot)$ and can be represented as the unique solution to the following Riccati equation (the argument $t$ is suppressed):
\begin{equation}\label{KB:Riccati}
\left\{
\begin{aligned}
& \dot \Sigma = \big[ A -CK^{-1}H \big] \Sigma +\Sigma \big[ A -CK^{-1}H \big]^\top\\
& \qquad -\Sigma H^\top \big[ KK^\top \big]^{-1} H\Sigma +\bar C\bar C^\top,\\
& \Sigma(0) =0,
\end{aligned}
\right.
\end{equation}
and $\widehat W(\cdot)$ is an $m$-dimensional standard Brownian motion which is intrinsically independent of the choice of $u(\cdot)$, yet is defined via $u(\cdot)$ as follows:
\begin{equation}\label{hatW:u}
\begin{aligned}
\widehat W(\cdot) :=\ & \int_0^\cdot K(s)^{-1} \Big\{ \mathrm dy^u(s) \\
& -[ H(s) \widehat x^u(s) +h(s) ]\, \mathrm ds \Big\}.
\end{aligned}
\end{equation}
Moreover, $\mathbb F^{\widehat W} = \mathbb F^0$.
\end{proposition}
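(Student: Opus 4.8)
The idea is to reduce to the uncontrolled system and then run the classical Kalman--Bucy/innovations argument, paying attention to the two features that distinguish our setting from the textbook one: the non-homogeneous drifts $b(\cdot),h(\cdot)$, and the correlation between the system noise $C(\cdot)\,\mathrm dW(\cdot)$ and the observation noise $K(\cdot)\,\mathrm dW(\cdot)$, both driven by the same $W$. Since $u(\cdot)\in L^2_{\mathbb F^0}(0,T;\mathbb R^k)$, the processes $\xi^u(\cdot)$ and $\eta^u(\cdot)$ solving \eqref{ODE:x1} and \eqref{ODE:y1} are $\mathbb F^0$-adapted, so \eqref{hat:u=0+1}--\eqref{tilde xu=x0} give $\widehat x^u=\widehat x^0+\xi^u$ and $\widetilde x^u=\widetilde x^0=\widetilde x$, and $\mathbb F^u=\mathbb F^0$ by Lemma~\ref{LEM:Fu=F0}. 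Hence it suffices to prove, for $u=0$, the filter SDE for $\widehat x^0$, the Riccati equation for $\Sigma=\mathbb E[\widetilde x\widetilde x^\top]$, and the Brownian property of $\widehat W$; the general statement then follows by adding the $\mathbb F^0$-adapted term $B(t)u(t)\,\mathrm dt$ coming from $\dot\xi^u=A\xi^u+Bu$, and \eqref{hatW:u} is unaffected because $\eta^u$ in $y^u$ cancels against $\int_0^\cdot H\xi^u\,\mathrm ds$ in $\int_0^\cdot H\widehat x^u\,\mathrm ds$.

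For the uncontrolled system I would introduce the innovations process $\nu(t):=y^0(t)-\int_0^t[H(s)\widehat x^0(s)+h(s)]\,\mathrm ds$. By the Fujisaki--Kallianpur--Kunita lemma (or, exploiting that $(x^0,y^0)$ is jointly Gaussian, by computing the conditional law directly), $\nu$ is a continuous $\mathbb F^0$-martingale with quadratic variation $\int_0^tK(s)K(s)^\top\,\mathrm ds$; since (H1) makes $K^{-1}$ bounded, $\widehat W(\cdot):=\int_0^\cdot K(s)^{-1}\,\mathrm d\nu(s)$ is a continuous $\mathbb F^0$-local martingale with the quadratic variation of a standard Brownian motion, hence by Lévy's characterization an $m$-dimensional ($m=d$) standard Brownian motion; it is manifestly $\mathbb F^0$-adapted, coincides with \eqref{hatW:u}, and its law does not depend on $u$ even though its definition uses $y^u$. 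To identify $D$, I would use the FKK filtering equation (again explicit in the Gaussian case): $\mathrm d\widehat x^0=[A\widehat x^0+b]\,\mathrm dt+G\,\mathrm d\nu$ with gain $G=(\Sigma H^\top+CK^\top)(KK^\top)^{-1}$, the cross term $CK^\top$ being exactly the contribution of the shared noise $W$. Since $\mathrm d\nu=K\,\mathrm d\widehat W$, the coefficient against $\widehat W$ is $GK=(\Sigma H^\top+CK^\top)(K^\top)^{-1}=C+\Sigma[K^{-1}H]^\top=D$, giving \eqref{F:Sys}--\eqref{D} for $u=0$.

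For the Riccati equation I would subtract the equations for $x^0$ and $\widehat x^0$. From $\mathrm d\nu=H\widetilde x\,\mathrm dt+K\,\mathrm dW$ one gets $\mathrm d\widehat W=K^{-1}H\widetilde x\,\mathrm dt+\mathrm dW$, whence
\[
\mathrm d\widetilde x=\big[A-DK^{-1}H\big]\widetilde x\,\mathrm dt+(C-D)\,\mathrm dW+\bar C\,\mathrm d\bar W,\qquad\widetilde x(0)=0.
\]
Applying Itô's formula to $\widetilde x\widetilde x^\top$ and taking expectations removes the stochastic integrals; substituting $C-D=-\Sigma[K^{-1}H]^\top$ and $DK^{-1}H=CK^{-1}H+\Sigma H^\top(KK^\top)^{-1}H$ then collapses the quadratic terms to exactly \eqref{KB:Riccati}. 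Existence, uniqueness and uniform boundedness of the symmetric positive semidefinite solution $\Sigma(\cdot)$ of this matrix Riccati ODE on $[0,T]$ is classical, and $D(\cdot)$ is then bounded by (H1). Finally, for $\mathbb F^{\widehat W}=\mathbb F^0$: the inclusion $\subseteq$ is immediate since $\widehat W$ is built from $\mathbb F^0$-adapted processes; conversely $(\widehat x^0,y^0)$ solves, driven by $\widehat W$, the linear system $\mathrm d\widehat x^0=[A\widehat x^0+b]\,\mathrm dt+D\,\mathrm d\widehat W$, $\mathrm dy^0=[H\widehat x^0+h]\,\mathrm dt+K\,\mathrm d\widehat W$ with deterministic coefficients and $\widehat x^0(0)=a$, $y^0(0)=0$, so $(\widehat x^0,y^0)$ is $\mathbb F^{\widehat W}$-adapted and $\mathcal F^0_t=\sigma\{y^0(s):s\le t\}\subseteq\mathcal F^{\widehat W}_t$.

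The main obstacle is the innovations step: proving that $\nu$ is an $\mathbb F^0$-martingale with the stated quadratic variation and, relatedly, the FKK representation that every $\mathbb F^0$-martingale is a stochastic integral against $\nu$ --- this is what guarantees that the filter $\widehat x^0$ carries no martingale component beyond $G\,\mathrm d\nu$. In the present linear--Gaussian framework this can be done more directly by computing the conditional distribution of $x^0(t)$ given $\mathcal F^0_t$ (Gaussian, with mean $\widehat x^0(t)$ and deterministic covariance $\Sigma(t)$), but it still requires care; everything afterwards is routine Itô calculus and linear-ODE theory, essentially as in Bensoussan~\cite[Proposition~9.1]{B-18} with the non-homogeneous-term and correlated-noise modifications noted above.
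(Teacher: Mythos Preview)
The paper does not supply its own proof of this proposition; it simply states that the result is ``similar to Bensoussan \cite[Proposition 9.1]{B-18}'' and moves on. Your proposal is correct and is precisely the standard Kalman--Bucy/innovations argument underlying that reference, carried out with the two adaptations you identify (non-homogeneous drifts and the correlated state/observation noise via the shared $W$); the reduction to the uncontrolled system through \eqref{hat:u=0+1}--\eqref{tilde xu=x0}, the gain computation $GK=C+\Sigma[K^{-1}H]^\top$, the error-dynamics derivation of \eqref{KB:Riccati}, and the two-sided inclusion for $\mathbb F^{\widehat W}=\mathbb F^0$ are all handled correctly.
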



	Now we turn our attention to the objective functional defined by \eqref{J} to obtain its decomposition corresponding to $x^u(\cdot) = \widehat x^u(\cdot) +\widetilde x(\cdot)$ as follows:
	\begin{equation}\label{WonhamSP}
		J(u(\cdot)) = \widehat J(u(\cdot)) +\widetilde J,
	\end{equation}
	where
	\begin{equation}\label{J:hat}
		\begin{aligned}
			& \widehat J(u(\cdot)) := \frac 1 2 \mathbb E \Big\{ \big\langle G\widehat x^u(T),\, \widehat x^u(T) \big\rangle +2\langle g,\, \widehat x^u(T) \rangle\\
			& \ \ +\int_0^T \Bigg[ \left\langle \begin{pmatrix} Q(t) & S(t)^\top \\ S(t) & R(t) \end{pmatrix} \begin{pmatrix} \widehat x^u(t) \\ u(t) \end{pmatrix},\, \begin{pmatrix} \widehat x^u(t) \\ u(t) \end{pmatrix} \right\rangle\\
			& \ \ +2 \left\langle \begin{pmatrix} q(t) \\ \rho(t) \end{pmatrix},\, \begin{pmatrix} \widehat x^u(t) \\ u(t) \end{pmatrix} \right\rangle \Bigg]\, \mathrm dt \Bigg\}
		\end{aligned}
	\end{equation}
	and
	\begin{equation}\label{tildeJ}
		\begin{aligned}
			& \widetilde J\\
			:=\ & \frac 1 2 \mathbb E \bigg\{ \langle G \widetilde x(T), \widetilde x(T) \rangle +\int_0^T \langle Q(t)\widetilde x(t), \widetilde x(t) \rangle\, \mathrm dt \bigg\}\\
			=\ & \frac 1 2 \bigg\{ \mbox{tr}\, \big( G\Sigma(T) \big) +\int_0^T \mbox{tr}\, \big( Q(t)\Sigma(t) \big)\, \mathrm dt \bigg\}
		\end{aligned}
	\end{equation}
(the last equation is due to $\Sigma(\cdot) := \mathbb E[ \widetilde x(\cdot) \widetilde x(\cdot)^\top]$, see Proposition \ref{Prop:filter}), for any $u(\cdot) \in L^2_{\mathbb F^0}(0,T;\mathbb R^k)$.

\section{Completion of squares and saddle points}\label{Sec:Com-Sad}
	
Inspired by the work of Yu \cite{Y-15} on an LQ zero-sum stochastic differential game with complete information, we employ the method of completion of squares to obtain the desired feedback laws.

First of all, we introduce the following

\noindent{\bf Assumption (H2)} The matrix-valued process $R(\cdot)^{-1}$ exists and is uniformly bounded on $[0,T]$.
	
Under Assumption (H2), another Riccati equation is also introduced (it differs from \eqref{KB:Riccati} and the argument $t$ is suppressed for simplicity):
	\begin{equation}\label{Riccati}
		\left\{
		\begin{aligned}
			& -\dot P = P A +A^\top P +Q -\Theta^\top R^{-1}\Theta,\\
			& P(T) =G,
		\end{aligned}
		\right.
	\end{equation}
where 
	\begin{equation}\label{Theta}
		\Theta := B^\top P +S = \begin{pmatrix} B_1^\top P +S_1 \\
			B_2^\top P +S_2 \end{pmatrix} =: \begin{pmatrix} \Theta_1 \\ \Theta_2 \end{pmatrix}.
	\end{equation}
	
	We note that since the diffusion of the filtering equation \eqref{F:Sys} does not depend on the state and control, \eqref{Riccati} can be regarded as a Riccati equation corresponding to a deterministic LQ problem. However, unlike the optimal control problem, the solvability of Riccati equation \eqref{Riccati} associated with the game problem is quite challenging in the general case. In this section, in order to focus on the main aim of feedback laws, we would like to assume the following
	
	\noindent{\bf Assumption (H3)} The Riccati equation \eqref{Riccati} admits a solution $P(\cdot) \in L^\infty(0,T;\mathbb S^n)$.

	In fact, Tian et al. \cite[Proposition A.3]{TYZh-20} and Sun \cite[Theorem 4.3]{S-21} provided two unique solvability results for \eqref{Riccati} in special cases.

We note that the filtering equation \eqref{F:Sys} contains non-homogeneous terms and the functional $\widehat J$ (see \eqref{J:hat}) contains first-order terms. Therefore, unlike \cite{Y-15}, we also need to introduce an additional ODE ($t$ is suppressed):
	\begin{equation}\label{ODE:p}
		\left\{
		\begin{aligned}
			& -\dot p = A^\top p +Pb +q -\Theta^\top R^{-1} \nu,\\
			& p(T) =g,
		\end{aligned}
		\right.
	\end{equation}
	where
	\begin{equation}\label{nu}
		\nu := B^\top p +\rho = \begin{pmatrix} B_1^\top p +\rho_1 \\ B_2^\top p +\rho_2 \end{pmatrix} =: \begin{pmatrix} \nu_1 \\ \nu_2 \end{pmatrix}.
	\end{equation}
	It is clear that, when $P(\cdot) \in L^\infty(0,T;\mathbb S^n)$, ODE \eqref{ODE:p} admits a unique solution.

We recall that when Yu \cite{Y-15} studied a zero-sum LQ game problem, a two-step complication-of-squares procedure was introduced, i.e., the square was first completed with respect to $u_1(\cdot)$ (resp. $u_2(\cdot)$) and then with respect to $u_2(\cdot)$ (resp. $u_1(\cdot)$). Instead, the remainder of this section consolidates the two-step procedure of \cite{Y-15} into the classical single-step treatment of LQ optimal control, which completes the square jointly for $u(\cdot) =(u_1(\cdot),u_2(\cdot))$, and incorporates the block-matrix diagonalization technique (see \eqref{E1Lambda1} and \eqref{E2Lambda2}).

Let $u(\cdot)\in L^2_{\mathbb F^0}(0,T;\mathbb R^k)$. We apply It\^o's formula to $\langle P(\cdot)\widehat x^u(\cdot),\, \widehat x^u(\cdot) \rangle +2\langle p(\cdot),\, \widehat x^u(\cdot) \rangle$ to have
	\[
	\begin{aligned}
		& \mathbb E\Big[ \big\langle G\widehat x^u(T),\, \widehat x^u(T) \big\rangle +2\big\langle g,\, \widehat x^u(T) \big\rangle \Big]\\
		=\ & \langle P(0)a,\, a \rangle +2\langle p(0),\, a \rangle +\mathbb E \int_0^T \Big\{ 2\langle p,\, b \rangle\\
		& +\mbox{tr}\, \big( DD^\top P \big) -\big\langle Q\widehat x^u,\, \widehat x^u \big\rangle -2 \langle q,\, \widehat x^u \rangle\\
		& +\big\langle R^{-1}\Theta \widehat x^u,\, \Theta\widehat x^u +2\nu \big\rangle\\
		& +2 \big\langle u,\, B^\top P \widehat x^u +B^\top p \big\rangle \Big\}\, \mathrm dt.
	\end{aligned}
	\]
Then, from
	\[
	\big\langle R^{-1}\Theta \widehat x^u,\, \Theta\widehat x^u +2\nu \big\rangle =\big\langle R^{-1}\Delta^u,\, \Delta^u \big\rangle -\langle R^{-1}\nu,\, \nu \rangle
	\]
	with
	\begin{equation}\label{Delta}
		\Delta^u := \Theta\widehat x^u +\nu =\begin{pmatrix} \Theta_1\widehat x^u +\nu_1 \\ \Theta_2\widehat x^u +\nu_2 \end{pmatrix} =: \begin{pmatrix} \Delta^u_1 \\ \Delta^u_2 \end{pmatrix},
	\end{equation}
we immediately get
\[
	\begin{aligned}
		& \mathbb E\Big[ \big\langle G\widehat x^u(T),\, \widehat x^u(T) \big\rangle +2\big\langle g,\, \widehat x^u(T) \big\rangle \Big]\\
		=\ & \Gamma +\mathbb E \int_0^T \Big\{ -\big\langle Q\widehat x^u,\, \widehat x^u \big\rangle -2 \langle q,\, \widehat x^u \rangle\\
		& +\big\langle R^{-1}\Delta^u,\, \Delta^u \big\rangle +2 \big\langle u,\, B^\top P \widehat x^u +B^\top p \big\rangle \Big\}\, \mathrm dt,
	\end{aligned}
	\]
	where the constant $\Gamma$ is defined by
	\begin{equation}\label{Gamma}
		\begin{aligned}
			\Gamma :=\ & \langle P(0)a,\, a \rangle +2\langle p(0),\, a \rangle +\int_0^T \Big\{ 2\langle p,\, b \rangle\\
			& +\mbox{tr}\, \big( DD^\top P \big) -\langle R^{-1}\nu,\, \nu \rangle \Big\}\, \mathrm dt.
		\end{aligned}
	\end{equation}
	By the definition \eqref{J:hat} of $\widehat J$ and the completion of square, we further obtain
\begin{equation}\label{CS0}
		\begin{aligned}
			& 2\widehat J(u(\cdot)) - \Gamma\\
			=\ & \mathbb E\int_0^T \Big\{ \langle Ru, u \rangle +2 \langle u, \Delta^u \rangle +\langle R^{-1}\Delta^u, \Delta^u \rangle \Big\}\, \mathrm dt\\
			=\ & \mathbb E\int_0^T \Big\langle R\big[ u+R^{-1}\Delta^u \big],\, u+R^{-1}\Delta^u \Big\rangle \, \mathrm dt
		\end{aligned}
	\end{equation}
	with the notations $\Gamma$, $\Delta^u$, $\Theta$ and $\nu$ defined by \eqref{Gamma}, \eqref{Delta}, \eqref{Theta} and \eqref{nu}, respectively.


\subsection{A saddle point in explicit-implicit feedback law form}
	
As usual, for a mapping $M:[0,T] \rightarrow \mathbb S^n$, if there exists a constant $\delta>0$ such that $M(t)-\delta I_n$ is positive semi-definite (resp. $M(t)+\delta I_n$ is negative semi-definite) for almost all $t\in [0,T]$, we call $M(\cdot)$ uniformly positive definite (resp. uniformly negative definite), denoted by $M(\cdot) \gg 0$ (resp. $M(\cdot) \ll 0$).

For our game problem, we propose the following 
%

{\bf Condition (I)} $R_{11}(\cdot) \ll 0$ and $[R_{22} -R_{21}R_{11}^{-1} R_{21}^\top](\cdot) \gg 0$.

Under Condition (I), the matrix $R$ can be block-diagonalized as follows (the argument $t$ is suppressed):
\begin{equation}\label{E1Lambda1}
R = E_1^{-1} \Lambda_1 (E_1^\top)^{-1},
\end{equation}
where
\begin{equation}\label{E1}
E_1:= \begin{pmatrix} I & 0 \\ -R_{21}R_{11}^{-1} & I \end{pmatrix}
\end{equation}
and
\begin{equation}\label{Lambda1}
\Lambda_1 := \begin{pmatrix} R_{11} & 0 \\ 0 & R_{22} -R_{21}R_{11}^{-1}R_{21}^\top \end{pmatrix}.
\end{equation}
From the above, it is obvious that Condition (I) implies Assumption (H2). Moreover,
%
%
	\begin{equation}\label{EI:AE}
		\begin{aligned}
			u+R^{-1}\Delta^u =\ & E_1^\top \big[ (E_1^\top)^{-1}u +\Lambda_1^{-1} E_1 \Delta^u \big]\\
			=\ & E_1^\top \begin{pmatrix} u_1 -\varphi_1(\widehat x^u, u_2)\\ u_2 -\psi_2(\widehat x^u) \end{pmatrix},
		\end{aligned}
	\end{equation}
	where the mappings $\varphi_1:[0,T]\times \mathbb R^n \times \mathbb R^{k_2} \rightarrow \mathbb R^{k_1}$ and $\psi_2: [0,T] \times \mathbb R^n \rightarrow \mathbb R^{k_2}$ are defined by
	\begin{equation}\label{EI:varphi1}
		\begin{aligned}
			\varphi_1(t,x,u_2) := -R_{11}(t)^{-1} \big[ R_{21}(t)^\top u_2\\
			+\Theta_1(t)x +\nu_1(t) \big]
		\end{aligned}
	\end{equation}
	and
	\begin{equation}\label{EI:psi2}
		\begin{aligned}
			\psi_2(t,x) := -\big[ R_{22} -R_{21}R_{11}^{-1}R_{21}^\top \big]^{-1}(t) \Big\{ \Theta_2(t)x\\
			+\nu_2(t) -\big[ R_{21}R_{11}^{-1} \big](t) \big[ \Theta_1(t)x +\nu_1(t) \big] \Big\}
		\end{aligned}
	\end{equation}
	with the notations $\Theta_1(t)$, $\Theta_2(t)$, $\nu_1(t)$ and $\nu_2(t)$ defined by \eqref{Theta} and \eqref{nu}, for any $(t,x) \in [0,T]\times \mathbb R^n$ and any $u_2 \in \mathbb R^{k_2}$, respectively.

	With the help of \eqref{E1Lambda1}, \eqref{EI:AE} and \eqref{Lambda1}, Equation \eqref{CS0} is reduced to
	\begin{equation}\label{CS1}
		\begin{aligned}
			& 2\widehat J(u(\cdot)) = \Gamma + \mathbb E\int_0^T \Big\{ \Big\langle R_{11} \big[ u_1 -\varphi_1(\widehat x^u, u_2) \big],\\
			& \qquad\quad u_1 -\varphi_1(\widehat x^u, u_2) \Big\rangle + \Big\langle \big[ R_{22}-R_{21}R_{11}^{-1}R_{21}^\top \big]\\
			& \qquad\quad \times \big[ u_2 -\psi_2(\widehat x^u) \big],\, u_2 -\psi_2(\widehat x^u) \Big\rangle\Big\}\, \mathrm dt.
		\end{aligned}
	\end{equation}

	\begin{lemma}\label{Lem:nonempty}
		Let Assumptions (H1) and (H3) and Condition (I) hold. Let the pair of mappings $(\varphi_1,\psi_2)$ be defined by \eqref{EI:varphi1} and \eqref{EI:psi2}. Then,
		\begin{enumerate}[(a)]
			\item the CMF-SDE \eqref{x:varphi1psi2} admits a unique solution $x^{\varphi_1,\psi_2}(\cdot) \in L^2_{\mathbb F}(\Omega;C(0,T;\mathbb R^n))$;
			
			\item the $u^{\varphi_1,\psi_2}(\cdot)$ defined by \eqref{u:varphi1psi2} belongs to $\mathscr U[0,T]$.
		\end{enumerate}
	\end{lemma}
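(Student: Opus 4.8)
The plan is to convert the coupled conditional mean-field system \eqref{x:varphi1psi2}--\eqref{u:varphi1psi2} into an ordinary linear SDE by first solving for the conditional mean $\mathbb E[x^{\varphi_1,\psi_2}(\cdot)\,|\,\mathcal F^0_\cdot]$ in closed form, and only afterwards recovering the state. As a preliminary I would record that the coefficients entering the construction are under control: Condition (I) gives the factorization \eqref{E1Lambda1}, hence Assumption (H2) and a bounded $R(\cdot)^{-1}$; Assumption (H3) gives $P(\cdot)\in L^\infty$, so $\Theta(\cdot)\in L^\infty$, the ODE \eqref{ODE:p} has a unique continuous solution $p(\cdot)$, and $\nu(\cdot)\in L^2$; Assumption (H1) together with \eqref{KB:Riccati} yields a bounded $\Sigma(\cdot)$, whence $D(\cdot)\in L^\infty$ by \eqref{D}.

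\textit{Reduction to a linear SDE for the conditional mean.} Suppose a solution $x^{\varphi_1,\psi_2}(\cdot)$ of \eqref{x:varphi1psi2} exists and put $\widehat x(\cdot):=\mathbb E[x^{\varphi_1,\psi_2}(\cdot)\,|\,\mathcal F^0_\cdot]$; this is exactly the filter of the state driven by the induced control $u^{\varphi_1,\psi_2}$ of \eqref{u:varphi1psi2}, so the identity \eqref{EI:AE} applies with $u=u^{\varphi_1,\psi_2}$. Its right-hand side vanishes precisely when $u_1=\varphi_1(\widehat x,u_2)$ and $u_2=\psi_2(\widehat x)$, and since $E_1^\top$ is invertible this forces $u^{\varphi_1,\psi_2}=-R^{-1}(\Theta\widehat x+\nu)$. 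Substituting into the filtering equation \eqref{F:Sys} of Proposition \ref{Prop:filter} shows that $\widehat x$ must solve
\begin{equation}\label{plan:lin}
\left\{
\begin{aligned}
&\mathrm d\widehat x(t)=\big[(A-BR^{-1}\Theta)(t)\,\widehat x(t)-(BR^{-1}\nu)(t)+b(t)\big]\,\mathrm dt+D(t)\,\mathrm d\widehat W(t),\\
&\widehat x(0)=a,
\end{aligned}
\right.
\end{equation}
with $\widehat W(\cdot)$ the Brownian motion of Proposition \ref{Prop:filter}. By the preliminary remarks the coefficients of \eqref{plan:lin} are bounded or integrable, so \eqref{plan:lin} has a unique solution $\widehat x(\cdot)\in L^2_{\mathbb F^{\widehat W}}(\Omega;C(0,T;\mathbb R^n))=L^2_{\mathbb F^0}(\Omega;C(0,T;\mathbb R^n))$, using $\mathbb F^{\widehat W}=\mathbb F^0$.

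\textit{Construction and consistency check.} Conversely, I would take $\widehat x(\cdot)$ to be the unique solution of \eqref{plan:lin}, set $\bar u(\cdot):=-R(\cdot)^{-1}(\Theta(\cdot)\widehat x(\cdot)+\nu(\cdot))$, which lies in $L^2_{\mathbb F^0}(0,T;\mathbb R^k)$, and let $x(\cdot)\in L^2_{\mathbb F}(\Omega;C(0,T;\mathbb R^n))$ be the solution of \eqref{Sys} driven by $u=\bar u$. Proposition \ref{Prop:filter}, applied with $u=\bar u$, shows $\mathbb E[x(\cdot)\,|\,\mathcal F^0_\cdot]$ satisfies \eqref{F:Sys}; inserting $B\bar u=-BR^{-1}(\Theta\widehat x+\nu)$ shows it satisfies the same equation \eqref{plan:lin} as $\widehat x$ — the difference of the two solves $\dot\delta=A\delta$, $\delta(0)=0$, hence vanishes — so $\mathbb E[x(t)\,|\,\mathcal F^0_t]=\widehat x(t)$. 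Feeding this back into \eqref{EI:AE} with $u=\bar u$, the left-hand side equals $\bar u+R^{-1}(\Theta\widehat x+\nu)=0$, so $\bar u_2=\psi_2(\widehat x)$ and $\bar u_1=\varphi_1(\widehat x,\bar u_2)$; that is, $\bar u$ is exactly the induced control \eqref{u:varphi1psi2} of $(\varphi_1,\psi_2)$ at $x$. Hence $x$ solves \eqref{x:varphi1psi2} with $u^{\varphi_1,\psi_2}(\cdot)=\bar u(\cdot)$, which proves existence in (a). For uniqueness I would run the same mechanism: given any solution $x'(\cdot)$, its conditional mean lies in $L^2_{\mathbb F^0}$, so the induced control is in $L^2_{\mathbb F^0}$, Proposition \ref{Prop:filter} applies, and the argument of the preceding step forces the conditional mean to solve \eqref{plan:lin}; hence it equals $\widehat x$, the induced control equals $\bar u$, and $x'=x$ by uniqueness for \eqref{Sys}.

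\textit{Membership in $\mathscr U[0,T]$ and the main difficulty.} For (b), $u^{\varphi_1,\psi_2}=\bar u$ belongs to $L^2_{\mathbb F^0}(0,T;\mathbb R^k)\subseteq L^2_{\mathbb F}(0,T;\mathbb R^k)$ and is $\mathbb F^0$-progressively measurable by construction; since $\bar u$ is $\mathbb F^0$-adapted, the linearity argument behind Lemma \ref{LEM:Fu=F0} (the decomposition $y^{\bar u}=y^0+\eta^{\bar u}$ with $\eta^{\bar u}$ a causal linear functional of $\bar u$, and the resulting Volterra-type invertibility) gives $\mathbb F^{\bar u}=\mathbb F^0$, so $\bar u$ is also $\mathbb F^{\bar u}$-progressively measurable and hence lies in $\mathscr U[0,T]$. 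I expect the main obstacle to be the reduction step: establishing, via Proposition \ref{Prop:filter} and the algebraic identity \eqref{EI:AE}, that the conditional mean of \emph{any} solution is forced to satisfy the autonomous linear SDE \eqref{plan:lin}, and then checking that the state reconstructed from $\widehat x$ genuinely reproduces the feedback-generated controls — i.e.\ that the fixed-point relation built into \eqref{x:varphi1psi2}--\eqref{u:varphi1psi2} is self-consistent. Everything after that, including the $\mathscr U[0,T]$-membership, is essentially bookkeeping on top of the standard linear-filtering facts already in place.
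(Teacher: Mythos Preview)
Your argument for part (a) is correct but follows a route different from the paper's. The paper substitutes the explicit affine formulas for $\varphi_1$ and $\psi_2$ into the drift of \eqref{x:varphi1psi2}, obtaining a linear CMF-SDE with bounded coefficients, and then simply cites Proposition~7.2 of Carmona--Zhu \cite{CZh-16} for existence and uniqueness. You instead decouple the system by hand: first solve the closed linear SDE \eqref{plan:lin} for the filter, then rebuild the state from \eqref{Sys} with $u=\bar u$, and finally check consistency of the fixed point using \eqref{EI:AE}. Your route is longer but more elementary---it avoids any black-box CMF-SDE theory and makes the separation-principle structure completely explicit---while the paper's route is a one-line citation.

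For part (b), however, your argument has a genuine gap. You appeal to ``the linearity argument behind Lemma~\ref{LEM:Fu=F0}'' to conclude $\mathbb F^{\bar u}=\mathbb F^0$, but that lemma is stated for $u(\cdot)\in\mathscr U[0,T]$, i.e.\ for controls already known to be $\mathbb F^u$-progressively measurable. From the decomposition $y^{\bar u}=y^0+\eta^{\bar u}$ with $\bar u$ only $\mathbb F^0$-adapted you get $\mathcal F^{\bar u}_t\subseteq\mathcal F^0_t$, but the reverse inclusion requires $\eta^{\bar u}$ (hence $\bar u$) to be $\mathbb F^{\bar u}$-adapted, which is precisely what you are trying to prove. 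The phrase ``Volterra-type invertibility'' does not close this circularity.

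The fix is already implicit in what you have done, and it is exactly what the paper carries out. Since you established $\widehat x^{\bar u}=\widehat x$ and since Proposition~\ref{Prop:filter} represents the innovations process via \eqref{hatW:u} with $u=\bar u$, substitute that representation of $\widehat W$ into your equation \eqref{plan:lin}. The result is a linear SDE for $\widehat x$ whose only source of randomness is $\mathrm dy^{\bar u}$ (this is the paper's equation \eqref{EI:x:varphi1psi2}). Hence $\widehat x(\cdot)$ is $\mathbb F^{\bar u}$-progressively measurable, and therefore so is $\bar u=-R^{-1}(\Theta\widehat x+\nu)$. That is the missing step; once it is in place your proof of (b) is complete.
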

	
	\begin{proof}
		(a) Due to the specific definitions of $\varphi_1$ and $\psi_2$ (see \eqref{EI:varphi1} and \eqref{EI:psi2}), the $u^{\varphi_1,\psi_2}(\cdot)$ involved in CMF-SDE \eqref{x:varphi1psi2} is correspondingly reduced to
		\begin{equation}\label{EI:u}
			\begin{aligned}
				& u_1^{\varphi_1,\psi_2}(\cdot) = -R_{11}(\cdot)^{-1}\Big[ R_{21}(\cdot)^\top u_2^{\varphi_1,\psi_2}(\cdot)\\
				& \qquad +\Theta_1(\cdot) \widehat x^{\varphi_1,\psi_2}(\cdot) +\nu_1(\cdot) \Big],\\
				& u_2^{\varphi_1,\psi_2}(\cdot) = -\big[ R_{22} -R_{21}R_{11}^{-1}R_{21}^\top \big]^{-1}(\cdot)\\
				& \qquad \times \Big\{ \Theta_2(\cdot) \widehat x^{\varphi_1,\psi_2}(\cdot) +\nu_2(\cdot)\\
				& \qquad -\big[ R_{21}R_{11}^{-1} \big](\cdot) \big[ \Theta_1(\cdot) \widehat x^{\varphi_1,\psi_2}(\cdot) +\nu_1(\cdot) \big] \Big\}.
			\end{aligned}
		\end{equation}
		Here we recall the notation $\widehat x^{\varphi_1,\psi_2}(\cdot) := \mathbb E[x^{\varphi_1,\psi_2}(\cdot)\, |\, \mathcal F^0_\cdot]$. Due to the boundedness of the involved matrix-valued processes in \eqref{EI:u}, Proposition 7.2 in Carmona and Zhu \cite{CZh-16} implies the unique solvability of CMF-SDE \eqref{x:varphi1psi2} in the space $L^2_{\mathbb F}(\Omega;C(0,T;\mathbb R^n))$.
		
(b) From Conclusion (a), we know that $\widehat x^{\varphi_1,\psi_2}(\cdot)$ is square integrable and $\mathbb F^0$-progressively measurable. Then, the expression \eqref{EI:u} implies that $u^{\varphi_1,\psi_2}(\cdot)$ also satisfies both square integrability and $\mathbb F^0$-progressive measurability. Recalling the definition \eqref{Add:Con} of $\mathscr U[0,T]$, we still need to verify that $u^{\varphi_1,\psi_2}(\cdot)$ is $\mathbb F^{u^{\varphi_1,\psi_2}}$-progressively measurable. Again, in view of \eqref{EI:u}, we can achieve this by establishing the $\mathbb F^{u^{\varphi_1,\psi_2}}$-progressive measurability of $\widehat x^{\varphi_1,\psi_2}(\cdot)$.

		
In fact, according to Proposition \ref{Prop:filter}, especially the representation \eqref{hatW:u} of $\widehat W(\cdot)$, the filtering equation related to $x^{\varphi_1,\psi_2}(\cdot)$ can be rewritten as (the
argument $t$ is suppressed for simplicity)
\begin{equation}\label{EI:x:varphi1psi2}
\left\{
\begin{aligned}
& \mathrm d\widehat x^{\varphi_1,\psi_2} = \big[ \mathbf A^{\varphi_1,\psi_2} \widehat x^{\varphi_1,\psi_2} +\mathbf b^{\varphi_1,\psi_2} \big]\, \mathrm dt\\
& \qquad +DK^{-1}\Big\{ \mathrm dy^{u^{\varphi_1,\psi_2}} -\big[ H\widehat x^{\varphi_1,\psi_2} +h \big]\, \mathrm dt \Big\},\\
& \widehat x^{\varphi_1,\psi_2}(0) =a,
\end{aligned}
\right.
\end{equation}
where
\begin{equation}
\begin{aligned}
& \mathbf A^{\varphi_1,\psi_2} := [A -B_1R_{11}^{-1}\Theta_1] -[B_2 -B_1R_{11}^{-1}R_{21}^\top]\\
& \qquad \times [R_{22} -R_{21}R_{11}^{-1}R_{21}^\top]^{-1} [\Theta_2 -R_{21}R_{11}^{-1}\Theta_1],\\
& \mathbf b^{\varphi_1,\psi_2} := [b -B_1R_{11}^{-1}\nu_1] -[B_2 -B_1R_{11}^{-1}R_{21}^\top]\\
& \qquad \times [R_{22} -R_{21}R_{11}^{-1}R_{21}^\top]^{-1} [\nu_2 -R_{21}R_{11}^{-1}\nu_1].
\end{aligned}
\end{equation}
Since the randomness of SDE \eqref{EI:x:varphi1psi2} only originates from $y^{u^{\varphi_1,\psi_2}}(\cdot)$, its solution $\widehat x^{\varphi_1,\psi_2}(\cdot)$ is $\mathbb F^{u^{\varphi_1,\psi_2}}$-progressively measurable. We finish the proof.
	\end{proof}

	\begin{theorem}\label{THM:EI-Sad}
		Let Assumptions (H1) and (H3) and Condition (I) hold. Then, $(\varphi_1, \psi_2)$ defined by \eqref{EI:varphi1} and \eqref{EI:psi2} is a saddle point in explicit-implicit feedback law form. Moreover, 
		\begin{equation}\label{EI:value}
			J\big( u^{\varphi_1,\psi_2}(\cdot) \big) = \frac {\Gamma}{2} +\widetilde J,
		\end{equation}
		where the constants $\Gamma$ and $\widetilde J$ are given by \eqref{Gamma} and \eqref{tildeJ}, respectively.
	\end{theorem}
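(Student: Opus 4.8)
The plan is to read the conclusion off the completion-of-squares identity \eqref{CS1} together with the sign hypotheses in Condition (I). First I would check that $(\varphi_1,\psi_2)$ is genuinely a pair of explicit-implicit feedback laws in the sense of Definition \ref{Def:law}. The maps in \eqref{EI:varphi1}--\eqref{EI:psi2} are affine in the spatial and control variables with coefficients assembled from $R_{11}(\cdot)^{-1}$, $[R_{22}-R_{21}R_{11}^{-1}R_{21}^\top]^{-1}(\cdot)$, $\Theta_i(\cdot)$ and $\nu_i(\cdot)$, all of which are bounded (or $L^2$); hence $\varphi_1$ and $\psi_2$ are Borel measurable, and for any admissible opponent control the associated CMF-SDEs in Definition \ref{Def:law}(i)(a) and (ii)(a) are uniquely solvable in $L^2_{\mathbb F}(\Omega;C(0,T;\mathbb R^n))$ by the same invocation of Proposition 7.2 of \cite{CZh-16} used in the proof of Lemma \ref{Lem:nonempty}; parts (iii)(a)--(b) of Definition \ref{Def:law} are precisely Lemma \ref{Lem:nonempty}. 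For the non-emptiness requirements (i)(b) and (ii)(b) I would use the consistency observation that substituting $u_2=u_2^{\varphi_1,\psi_2}(\cdot)$ into \eqref{x:varphi1u2} reproduces \eqref{x:varphi1psi2}, so by uniqueness $x^{\varphi_1,u_2^{\varphi_1,\psi_2}}(\cdot)=x^{\varphi_1,\psi_2}(\cdot)$ and therefore $u_1^{\varphi_1,u_2^{\varphi_1,\psi_2}}(\cdot)=u_1^{\varphi_1,\psi_2}(\cdot)$; then $\big(u_1^{\varphi_1,u_2^{\varphi_1,\psi_2}}(\cdot),u_2^{\varphi_1,\psi_2}(\cdot)\big)=u^{\varphi_1,\psi_2}(\cdot)\in\mathscr U[0,T]$ by Lemma \ref{Lem:nonempty}(b), so $u_2^{\varphi_1,\psi_2}(\cdot)\in\mathscr U_2[\varphi_1;0,T]$, and the argument for (ii)(b) is symmetric.

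The core of the proof is then to derive the two one-sided extremal identities in \eqref{SP:EI}. Fix $u_2(\cdot)\in\mathscr U_2[\varphi_1;0,T]$; by the definition of $\mathscr U[0,T]$ one has $u_1^{\varphi_1,u_2}(\cdot)\in L^2_{\mathbb F^0}(0,T;\mathbb R^{k_1})$, so \eqref{CS1} is applicable to $u(\cdot)=(u_1^{\varphi_1,u_2}(\cdot),u_2(\cdot))$. Since $x^u(\cdot)=x^{\varphi_1,u_2}(\cdot)$ by uniqueness we have $\widehat x^u(\cdot)=\widehat x^{\varphi_1,u_2}(\cdot)$, and the defining relation \eqref{u1:varphi1u2} gives $u_1^{\varphi_1,u_2}(\cdot)-\varphi_1(\cdot,\widehat x^u(\cdot),u_2(\cdot))\equiv 0$, so the first bracket in \eqref{CS1} vanishes. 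The surviving term is nonnegative because $[R_{22}-R_{21}R_{11}^{-1}R_{21}^\top](\cdot)\gg 0$, whence $2\widehat J(u_1^{\varphi_1,u_2}(\cdot),u_2(\cdot))\ge\Gamma$ and, via \eqref{WonhamSP}, $J(u_1^{\varphi_1,u_2}(\cdot),u_2(\cdot))\ge\Gamma/2+\widetilde J$, with equality exactly when $u_2(\cdot)=\psi_2(\cdot,\widehat x^{\varphi_1,u_2}(\cdot))$, i.e. for $u_2(\cdot)=u_2^{\varphi_1,\psi_2}(\cdot)$; this yields $\inf_{u_2}J(u_1^{\varphi_1,u_2}(\cdot),u_2(\cdot))=\Gamma/2+\widetilde J$. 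Symmetrically, for fixed $u_1(\cdot)\in\mathscr U_1[\psi_2;0,T]$, applying \eqref{CS1} to $(u_1(\cdot),u_2^{u_1,\psi_2}(\cdot))$ annihilates the second bracket, and since $R_{11}(\cdot)\ll 0$ the first bracket is nonpositive, giving $J(u_1(\cdot),u_2^{u_1,\psi_2}(\cdot))\le\Gamma/2+\widetilde J$ with equality for $u_1(\cdot)=u_1^{\varphi_1,\psi_2}(\cdot)$, hence $\sup_{u_1}J(u_1(\cdot),u_2^{u_1,\psi_2}(\cdot))=\Gamma/2+\widetilde J$. Finally, evaluating \eqref{CS1} at $u(\cdot)=u^{\varphi_1,\psi_2}(\cdot)$ makes both brackets vanish, so $J(u^{\varphi_1,\psi_2}(\cdot))=\Gamma/2+\widetilde J$; this coincides with both extremal values, which is exactly \eqref{SP:EI}, and it is the value formula \eqref{EI:value}.

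I expect the genuine work to be bookkeeping rather than analysis: carefully justifying the consistency identities of the type ``substituting one player's equilibrium induced control into the other player's feedback system reproduces the joint closed-loop system'' (these rest on the uniqueness statements in Definition \ref{Def:law} and Lemma \ref{Lem:nonempty}), and checking that every process appearing in the two extremal problems lies in $L^2_{\mathbb F^0}(0,T;\mathbb R^k)$ so that the identity \eqref{CS1} may legitimately be applied to it. Neither point is deep, but both must be spelled out explicitly for the infimum and supremum manipulations to be rigorous.
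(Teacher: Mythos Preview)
Your proposal is correct and follows essentially the same route as the paper's own proof: verify that $(\varphi_1,\psi_2)$ is a pair of explicit--implicit feedback laws via the affine structure of \eqref{EI:varphi1}--\eqref{EI:psi2}, Proposition~7.2 of \cite{CZh-16}, and Lemma~\ref{Lem:nonempty}, then read the saddle-point inequalities and the value formula directly off the completion-of-squares identity \eqref{CS1} together with the sign information in Condition~(I) and the decomposition \eqref{WonhamSP}. The paper organizes this into four labelled steps but the logical content is identical; your explicit mention of the ``consistency'' identifications (e.g.\ $x^{\varphi_1,u_2^{\varphi_1,\psi_2}}=x^{\varphi_1,\psi_2}$) is exactly what the paper means by ``by a direct verification''.
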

	
	\begin{proof}
		The whole proof is divided into four steps.
		
		(i) {\it $\varphi_1$ defined by \eqref{EI:varphi1} is an explicit feedback law.}
		
		Firstly, for any $u_2(\cdot) \in L^2_{\mathbb F^0}(0,T;\mathbb R^{k_2})$, due to the specific definition of $\varphi_1$ (see \eqref{EI:varphi1}), the $u_1^{\varphi_1,u_2}(\cdot)$ involved in CMF-SDE \eqref{x:varphi1u2} is correspondingly reduced to
		\begin{equation}\label{EI:u1}
			\begin{aligned}
				u_1^{\varphi_1,u_2}(\cdot) =\ & -R_{11}(\cdot)^{-1} \Big[ R_{21}(\cdot)^\top u_2(\cdot)\\
				& +\Theta_1(\cdot) \widehat x^{\varphi_1,u_2}(\cdot) +\nu_1(\cdot) \Big].
			\end{aligned}
		\end{equation}
		Similar to the proof of Lemma \ref{Lem:nonempty}(a), the boundedness of the involved matrix-valued processes in \eqref{EI:u1} and Proposition 7.2 in \cite{CZh-16} work once again to ensure that CMF-SDE \eqref{x:varphi1u2} admits a unique solution $x^{\varphi_1,u_2}(\cdot) \in L^2_{\mathbb F}(\Omega; C(0,T;\mathbb R^n))$.
		
		Secondly, by a direct verification, it can be known that $u_2^{\varphi_1,\psi_2}(\cdot)$ defined by \eqref{EI:u} belongs to $\mathscr U_2[\varphi_1;0,T]$.
		
		We have validated Definition \ref{Def:law}(i), and completed the proof of this step.
		
		(ii) {\it $\psi_2$ defined by \eqref{EI:psi2} is an implicit feedback law.}
		
		The proof is similar to that of Step (i). We omit it.
		
		(iii) {\it $(\varphi_1,\psi_2)$ is a pair of explicit-implicit feedback laws.}
		
		Combining Step (i), Step (ii) and Lemma \ref{Lem:nonempty}, we get this conclusion.
		
		(iv) {\it $(\varphi_1,\psi_2)$ is a saddle point.}
		
		Let us go back to \eqref{CS1}. Due to Condition (I), for any $u_1(\cdot) \in \mathscr U_1[\psi_2;0,T]$ and any $u_2(\cdot) \in \mathscr U_2[\varphi_1;0,T]$, we have
		\[
		\begin{aligned}
			& \widehat J\big( u_1^{\varphi_1,u_2}(\cdot), u_2(\cdot) \big) = \frac{\Gamma}{2} +\frac 1 2 \mathbb E\int_0^T \Big\langle \big[ R_{22}-R_{21}R_{11}^{-1}R_{21}^\top \big]\\
			& \qquad \times \big[ u_2 -\psi_2(\widehat x^{\varphi_1,u_2}) \big],\, u_2 -\psi_2(\widehat x^{\varphi_1,u_2}) \Big\rangle\, \mathrm dt\\
			& \geq \frac{\Gamma}{2} = \widehat J\big( u^{\varphi_1,\psi_2}(\cdot) \big)\\
			& \geq \frac{\Gamma}{2} + \frac 1 2 \mathbb E\int_0^T \Big\langle R_{11} \big[ u_1-\varphi_1\big( \widehat x^{u_1,\psi_2}, \psi_2(\widehat x^{u_1,\psi_2}) \big) \big],\, u_1\\
			& \qquad -\varphi_1\big( \widehat x^{u_1,\psi_2}, \psi_2(\widehat x^{u_1,\psi_2}) \big) \Big\rangle\, \mathrm dt = \widehat J \big( u_1(\cdot), u_2^{u_1,\psi_2}(\cdot) \big).
		\end{aligned}
		\]
		Moreover, with the help of \eqref{WonhamSP}, we obtain \eqref{SP:EI} from above, i.e. $(\varphi_1,\psi_2)$ is a saddle point. At the same time, \eqref{WonhamSP} also implies \eqref{EI:value}. The proof is completed.
\end{proof}

\subsection{A saddle point in implicit-explicit feedback law form}
	
The content of this subsection is symmetric to that of the previous subsection, then we state the conclusions succinctly. 

We propose

{\bf Condition (II)} $R_{22}(\cdot) \gg 0$ and $[R_{11}-R_{21}^\top R_{22}^{-1}R_{21}](\cdot) \ll 0$.
	

Under Condition (II), the matrix $R$ is block-diagonalized as follows ($t$ is suppressed):
\begin{equation}\label{E2Lambda2}
R = E_2^{-1} \Lambda_2 (E_2^\top)^{-1},
\end{equation}
where
\begin{equation}\label{E2}
E_2 := \begin{pmatrix} I & -R_{21}^\top R_{22}^{-1} \\ 0 & I \end{pmatrix}
\end{equation}
and
\begin{equation}\label{Lambda2}
\Lambda_2 := \begin{pmatrix} R_{11} -R_{21}^\top R_{22}^{-1}R_{21} & 0 \\ 0 & R_{22} \end{pmatrix}.
\end{equation}
Clearly, Condition (II) also implies Assumption (H2). Furthermore, in this case,
%
\begin{equation}\label{IE:AE}
		u+R^{-1}\Delta^u = E_2^\top \begin{pmatrix} u_1 -\psi_1(\widehat x^u)\\ u_2 -\varphi_2(\widehat x^u, u_1) \end{pmatrix},
	\end{equation}
	where the mappings $\psi_1:[0,T] \times \mathbb R^n \rightarrow \mathbb R^{k_1}$ and $\varphi_2:[0,T] \times \mathbb R^n \times \mathbb R^{k_1} \rightarrow \mathbb R^{k_2}$ are defined by
	\begin{equation}\label{IE:psi1}
		\begin{aligned}
			\psi_1(t,x) := -\big[ R_{11} -R_{21}^\top R_{22}^{-1} R_{21} \big]^{-1}(t) \Big\{ \Theta_1(t)x\\
			+\nu_1(t) -\big[ R_{21}^\top R_{22}^{-1} \big](t) \big[ \Theta_2(t)x +\nu_2(t) \big] \Big\}
		\end{aligned}
	\end{equation}
	and
	\begin{equation}\label{IE:varphi2}
		\begin{aligned}
			\varphi_2(t,x,u_1) := -R_{22}(t)^{-1} \big[ R_{21}(t)u_1\\
			+\Theta_2(t)x +\nu_2(t) \big],
		\end{aligned}
	\end{equation}
for any $(t,x) \in [0,T]\times \mathbb R^n$ and any $u_1 \in \mathbb R^{k_1}$, respectively.


	With the help of \eqref{E2Lambda2}, \eqref{IE:AE} and \eqref{Lambda2}, Equation \eqref{CS0} can be reduced to
	\begin{equation}\label{CS2}
		\begin{aligned}
			& 2\widehat J(u(\cdot)) = \Gamma + \mathbb E\int_0^T \Big\{ \Big\langle \big[ R_{11}-R_{21}^\top R_{22}^{-1} R_{21} \big]\\
			& \qquad \times \big[ u_1 -\psi_1(\widehat x^u) \big],\, u_1 -\psi_1(\widehat x^u) \Big\rangle + \Big\langle R_{22}\\
			& \qquad \times \big[ u_2-\varphi_2(\widehat x^u, u_1) \big],\, u_2-\varphi_2(\widehat x^u, u_1) \Big\rangle \Big\}\, \mathrm dt.
		\end{aligned}
	\end{equation}

Similar to Lemma \ref{Lem:nonempty} and Theorem \ref{THM:EI-Sad}, we have the following results.
	
	\begin{lemma}
		Let Assumptions (H1) and (H3) and Condition (II) hold. Let the pair of mappings $(\psi_1,\varphi_2)$ be defined by \eqref{IE:psi1} and \eqref{IE:varphi2}. Then, the following results hold:
		
		(a) The CMF-SDE
		\begin{equation}
			\left\{
			\begin{aligned}
				& \mathrm dx^{\psi_1,\varphi_2}(t) = \Big[ A(t)x^{\psi_1,\varphi_2}(t) +B_1(t)u_1^{\psi_1,\varphi_2}(t)\\
				& \qquad \qquad +B_2(t)u_2^{\psi_1,\varphi_2}(t) +b(t) \Big]\, \mathrm dt\\
				& \qquad \qquad +C(t)\, \mathrm dW(t) +\bar C(t)\, \mathrm d\bar W(t),\\
				& x^{\psi_1,\varphi_2}(0) =a,
			\end{aligned}
			\right.
		\end{equation}
		with
		\begin{equation}\label{u:psi1varphi2}
			\begin{aligned}
				& u_1^{\psi_1,\varphi_2}(t) := \psi_1\Big( t,\mathbb E\big[ x^{\psi_1,\varphi_2}(t)\, \big|\, \mathcal F^0_t \big] \Big),\\
				& u_2^{\psi_1,\varphi_2}(t) := \varphi_2\Big( t,\mathbb E\big[ x^{\psi_1,\varphi_2}(t)\, \big|\, \mathcal F^0_t \big], u_1^{\psi_1,\varphi_2}(t) \Big),
			\end{aligned}
		\end{equation}
		admits a unique solution $x^{\psi_1,\varphi_2}(\cdot) \in L^2_{\mathbb F}(\Omega;C(0,T;\mathbb R^n))$.
		
		(b) The $u^{\psi_1,\varphi_2}(\cdot)$ defined by \eqref{u:psi1varphi2} belongs to $\mathscr U[0,T]$.
	\end{lemma}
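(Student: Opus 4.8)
The plan is to reproduce the proof of Lemma \ref{Lem:nonempty} essentially verbatim, with the roles of Player $1$ and Player $2$ interchanged: the block-diagonalization \eqref{E1Lambda1}--\eqref{Lambda1} is replaced by \eqref{E2Lambda2}--\eqref{Lambda2}, and Condition (I) by Condition (II). Since Condition (II) is the exact symmetric counterpart of Condition (I), all the definiteness bookkeeping transfers; in particular $R_{22}(\cdot)^{-1}$ and $[R_{11}-R_{21}^\top R_{22}^{-1}R_{21}](\cdot)^{-1}$ both exist and are uniformly bounded on $[0,T]$, which is what makes $\psi_1$ and $\varphi_2$ in \eqref{IE:psi1}--\eqref{IE:varphi2} well-defined with bounded coefficients.

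For part (a), I would substitute \eqref{IE:psi1} and \eqref{IE:varphi2} into the defining relations \eqref{u:psi1varphi2}. Because $\psi_1$ does not involve $u_1$, the induced control $u_1^{\psi_1,\varphi_2}(\cdot)$ is directly an affine function of $\widehat x^{\psi_1,\varphi_2}(\cdot):=\mathbb E[x^{\psi_1,\varphi_2}(\cdot)\,|\,\mathcal F^0_\cdot]$; feeding this back into \eqref{IE:varphi2} shows $u_2^{\psi_1,\varphi_2}(\cdot)$ is affine in $\widehat x^{\psi_1,\varphi_2}(\cdot)$ too. All matrix-valued coefficients in these affine expressions are finite products of bounded processes (using the boundedness just noted together with $A(\cdot),B_i(\cdot),S_i(\cdot),R_{ij}(\cdot)\in L^\infty(0,T;\cdot)$, $P(\cdot)\in L^\infty(0,T;\mathbb S^n)$ from (H3), and $p(\cdot)$ continuous), so the reduced CMF-SDE has the standard Lipschitz dependence on the pair (state, conditional mean), and Proposition 7.2 in Carmona and Zhu \cite{CZh-16} applies exactly as in Lemma \ref{Lem:nonempty}(a), producing the unique solution $x^{\psi_1,\varphi_2}(\cdot)\in L^2_{\mathbb F}(\Omega;C(0,T;\mathbb R^n))$.

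For part (b), square integrability and $\mathbb F^0$-progressive measurability of $u^{\psi_1,\varphi_2}(\cdot)$ are immediate from part (a); by the definition \eqref{Add:Con} of $\mathscr U[0,T]$ it remains to verify $\mathbb F^{u^{\psi_1,\varphi_2}}$-progressive measurability, and since $u^{\psi_1,\varphi_2}(\cdot)$ is affine in $\widehat x^{\psi_1,\varphi_2}(\cdot)$ it suffices to do so for $\widehat x^{\psi_1,\varphi_2}(\cdot)$. Using Proposition \ref{Prop:filter}, in particular the representation \eqref{hatW:u} of $\widehat W(\cdot)$ and the definition \eqref{D} of $D(\cdot)$, one rewrites the filtering equation of $x^{\psi_1,\varphi_2}(\cdot)$ --- in complete analogy with \eqref{EI:x:varphi1psi2} --- as a closed linear SDE in $\widehat x^{\psi_1,\varphi_2}(\cdot)$ driven only by $y^{u^{\psi_1,\varphi_2}}(\cdot)$, with drift matrix and inhomogeneous term
\[
\begin{aligned}
& \mathbf A^{\psi_1,\varphi_2} := [A -B_2R_{22}^{-1}\Theta_2] -[B_1 -B_2R_{22}^{-1}R_{21}]\\
& \qquad \times [R_{11} -R_{21}^\top R_{22}^{-1}R_{21}]^{-1}[\Theta_1 -R_{21}^\top R_{22}^{-1}\Theta_2],\\
& \mathbf b^{\psi_1,\varphi_2} := [b -B_2R_{22}^{-1}\nu_2] -[B_1 -B_2R_{22}^{-1}R_{21}]\\
& \qquad \times [R_{11} -R_{21}^\top R_{22}^{-1}R_{21}]^{-1}[\nu_1 -R_{21}^\top R_{22}^{-1}\nu_2],
\end{aligned}
\]
obtained from $\mathbf A^{\varphi_1,\psi_2}$, $\mathbf b^{\varphi_1,\psi_2}$ by interchanging the player indices $1$ and $2$ (and accordingly $R_{21}$ with $R_{21}^\top$). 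Well-posedness of this SDE then forces $\widehat x^{\psi_1,\varphi_2}(\cdot)$, hence $u^{\psi_1,\varphi_2}(\cdot)$, to be $\mathbb F^{u^{\psi_1,\varphi_2}}$-progressively measurable, so $u^{\psi_1,\varphi_2}(\cdot)\in\mathscr U[0,T]$. The argument is routine throughout; the one genuinely delicate point is this self-referential rewriting in part (b) --- one must confirm that after inserting the feedback all coefficients remain bounded (which Condition (II) is precisely designed to guarantee) and that the equation really closes on $\widehat x^{\psi_1,\varphi_2}(\cdot)$ with $y^{u^{\psi_1,\varphi_2}}(\cdot)$ as its only noise, thereby severing the control-observation circular dependency.
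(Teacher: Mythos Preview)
Your proposal is correct and follows exactly the approach the paper itself takes: the paper gives no separate proof for this lemma, merely stating that it is ``similar to Lemma \ref{Lem:nonempty}'', and your argument is precisely the symmetric version of that proof with the player indices interchanged. Your explicit expressions for $\mathbf A^{\psi_1,\varphi_2}$ and $\mathbf b^{\psi_1,\varphi_2}$ are the correct swaps of those in \eqref{EI:x:varphi1psi2}.
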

	

	
	\begin{theorem}\label{THM:IE-Sad}
		Let Assumptions (H1) and (H3) and Condition (II) hold. Then, $(\psi_1,\varphi_2)$ defined by \eqref{IE:psi1} and \eqref{IE:varphi2} is a saddle point in implicit-explicit feedback law form. Moreover, 
\begin{equation}
J\big( u^{\psi_1,\varphi_2}(\cdot) \big) = \frac {\Gamma}{2} +\widetilde J,
\end{equation}
where $\Gamma$ and $\widetilde J$ are given by \eqref{Gamma} and \eqref{tildeJ}, respectively.
	\end{theorem}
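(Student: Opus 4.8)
The plan is to mirror, step for step, the argument that proves Theorem~\ref{THM:EI-Sad}, exploiting the symmetry $1\leftrightarrow 2$ and ``max $\leftrightarrow$ min'', with Equation~\eqref{CS2} playing the role of \eqref{CS1} and Condition (II) replacing Condition (I). The proof will therefore run in four steps: (i) $\psi_1$ from \eqref{IE:psi1} is an implicit feedback law for Player~1; (ii) $\varphi_2$ from \eqref{IE:varphi2} is an explicit feedback law for Player~2; (iii) $(\psi_1,\varphi_2)$ is a pair of implicit-explicit feedback laws; (iv) $(\psi_1,\varphi_2)$ is a saddle point and the value is $\Gamma/2+\widetilde J$.

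For Steps (i)--(iii) I would argue exactly as in Lemma~\ref{Lem:nonempty} and in Steps (i)--(iii) of the proof of Theorem~\ref{THM:EI-Sad}. Given $u_2(\cdot)\in L^2_{\mathbb F^0}(0,T;\mathbb R^{k_2})$, inserting \eqref{IE:psi1} into the defining relation of the induced control reduces the associated CMF-SDE to one whose coefficients are assembled from the uniformly bounded processes $[R_{11}-R_{21}^\top R_{22}^{-1}R_{21}]^{-1}$ (boundedness of the inverse being guaranteed by Condition (II)), $\Theta_i$ and $B_i$; Proposition~7.2 of \cite{CZh-16} then delivers a unique solution in $L^2_{\mathbb F}(\Omega;C(0,T;\mathbb R^n))$. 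Non-emptiness of $\mathscr U_2[\psi_1;0,T]$ follows from the preceding lemma, since it gives $u^{\psi_1,\varphi_2}(\cdot)\in\mathscr U[0,T]$, hence $u_2^{\psi_1,\varphi_2}(\cdot)\in\mathscr U_2[\psi_1;0,T]$; the symmetric statements hold for $\varphi_2$ and $\mathscr U_1[\varphi_2;0,T]$. The only delicate point here is the same one as in Lemma~\ref{Lem:nonempty}(b): the $\mathbb F^{u^{\psi_1,\varphi_2}}$-progressive measurability of the induced control, which is obtained by rewriting the filtering equation for $x^{\psi_1,\varphi_2}(\cdot)$ in the innovation form analogous to \eqref{EI:x:varphi1psi2} (using \eqref{hatW:u}), so that its only source of randomness is $y^{u^{\psi_1,\varphi_2}}(\cdot)$.

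Step (iv) is the decisive one and starts from \eqref{CS2}. For any admissible $u_1(\cdot)$ and $u_2(\cdot)$ I would evaluate $2\widehat J$ along the two one-sided controls. Choosing $u_2=u_2^{\varphi_2,u_1}$ annihilates the $R_{22}$-block in \eqref{CS2}, leaving $2\widehat J(u_1,u_2^{\varphi_2,u_1})=\Gamma+\mathbb E\int_0^T\langle[R_{11}-R_{21}^\top R_{22}^{-1}R_{21}][u_1-\psi_1(\widehat x^{\varphi_2,u_1})],\,u_1-\psi_1(\widehat x^{\varphi_2,u_1})\rangle\,\mathrm dt\le\Gamma$ because $[R_{11}-R_{21}^\top R_{22}^{-1}R_{21}](\cdot)\ll0$; choosing instead $u_1=u_1^{u_2,\psi_1}$ annihilates the other block, leaving $2\widehat J(u_1^{u_2,\psi_1},u_2)=\Gamma+\mathbb E\int_0^T\langle R_{22}[u_2-\varphi_2(\widehat x^{u_2,\psi_1},u_1^{u_2,\psi_1})],\,u_2-\varphi_2(\widehat x^{u_2,\psi_1},u_1^{u_2,\psi_1})\rangle\,\mathrm dt\ge\Gamma$ because $R_{22}(\cdot)\gg0$; and both remainder integrals vanish when $u(\cdot)=u^{\psi_1,\varphi_2}(\cdot)$, so $\widehat J(u^{\psi_1,\varphi_2})=\Gamma/2$. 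Chaining the three facts gives $\widehat J(u_1,u_2^{\varphi_2,u_1})\le\Gamma/2=\widehat J(u^{\psi_1,\varphi_2})\le\widehat J(u_1^{u_2,\psi_1},u_2)$; adding the control-independent constant $\widetilde J$ through \eqref{WonhamSP} upgrades this to the same inequalities for $J$, which is precisely the implicit-explicit analogue of \eqref{SP:EI}, and simultaneously yields the value formula $J(u^{\psi_1,\varphi_2}(\cdot))=\Gamma/2+\widetilde J$.

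I do not expect a genuinely new obstacle here: the whole argument is the mirror image of the proof of Theorem~\ref{THM:EI-Sad}. The only points requiring care are bookkeeping ones --- using the block factorization \eqref{E2Lambda2}--\eqref{Lambda2} consistently so that the correct sign conditions from Condition (II) attach to the right quadratic remainders in \eqref{CS2} (namely $R_{22}\gg0$ producing the ``$\ge$'' for the minimizing Player~2, and $R_{11}-R_{21}^\top R_{22}^{-1}R_{21}\ll0$ producing the ``$\le$'' for the maximizing Player~1), and checking that the sets $\mathscr U_1[\varphi_2;0,T]$ and $\mathscr U_2[\psi_1;0,T]$ over which the $\sup$ and $\inf$ are taken are exactly those prescribed by Definition~\ref{Def:law} with the players' roles exchanged.
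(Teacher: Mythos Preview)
Your proposal is correct and is exactly the approach the paper takes: the paper does not spell out a separate proof of Theorem~\ref{THM:IE-Sad} at all, merely stating that it follows ``similar to Lemma~\ref{Lem:nonempty} and Theorem~\ref{THM:EI-Sad}'', and your four-step mirror argument (with \eqref{CS2} in place of \eqref{CS1} and Condition~(II) in place of Condition~(I)) is precisely that similarity made explicit. The only cosmetic point is that your superscript order (e.g.\ $u_2^{\varphi_2,u_1}$, $\widehat x^{u_2,\psi_1}$) is swapped relative to the paper's convention $u_2^{u_1,\varphi_2}$, $\widehat x^{\psi_1,u_2}$; this is harmless but worth normalizing.
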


	\subsection{The case where Conditions (I) and (II) hold}
	
	At the end of this section, we would like to pay attention to the special case where both Condition (I) and Condition (II) hold true simultaneously. Obviously, these two conditions are equivalent to
	
	{\bf Condition (I \& II)} $R_{11}(\cdot) \ll 0$ and $R_{22}(\cdot) \gg 0$.


	\begin{corollary}\label{Cor:Same-Sad}
Let Assumptions (H1) and (H3) and Condition (I \& II) hold. Then, $(\varphi_1, \psi_2)$ defined by \eqref{EI:varphi1} and \eqref{EI:psi2} is a saddle point in explicit-implicit feedback law form, and $(\psi_1,\varphi_2)$ defined by \eqref{IE:psi1} and \eqref{IE:varphi2} is a saddle point in implicit-explicit feedback law form. Moreover,
		\begin{equation}\label{SameSP}
			u^{\varphi_1,\psi_2}(\cdot) =u^{\psi_1,\varphi_2}(\cdot) := u^*(\cdot)
		\end{equation}
and
\begin{equation}
J\big( u^*(\cdot) \big) = \frac {\Gamma}{2} +\widetilde J,
\end{equation}
where $\Gamma$ and $\widetilde J$ are given by \eqref{Gamma} and \eqref{tildeJ}, respectively.
	\end{corollary}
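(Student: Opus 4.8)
The plan is to read off the existence of both saddle points directly from Theorems \ref{THM:EI-Sad} and \ref{THM:IE-Sad}, and then to establish \eqref{SameSP} by showing that the two induced controls obey one and the same feedback relation, after which uniqueness of a closed-loop equation finishes the job.

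First I would verify that Condition (I \& II) is equivalent to the conjunction of Conditions (I) and (II). Condition (I) already contains $R_{11}(\cdot)\ll 0$ and Condition (II) already contains $R_{22}(\cdot)\gg 0$, so [(I) and (II)] implies (I \& II); conversely, if $R_{11}(\cdot)\ll 0$ then $R_{11}(\cdot)^{-1}\ll 0$, hence $-R_{21}R_{11}^{-1}R_{21}^\top$ is positive semi-definite, and adding $R_{22}(\cdot)\gg 0$ gives $[R_{22}-R_{21}R_{11}^{-1}R_{21}^\top](\cdot)\gg 0$; symmetrically, $R_{22}(\cdot)\gg 0$ yields $[R_{11}-R_{21}^\top R_{22}^{-1}R_{21}](\cdot)\ll 0$. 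Therefore Theorem \ref{THM:EI-Sad} applies and $(\varphi_1,\psi_2)$ is a saddle point in explicit–implicit form with $J(u^{\varphi_1,\psi_2}(\cdot))=\Gamma/2+\widetilde J$, while Theorem \ref{THM:IE-Sad} applies and $(\psi_1,\varphi_2)$ is a saddle point in implicit–explicit form with the same value; only \eqref{SameSP} remains to be proved.

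The crucial observation is that both $u^{\varphi_1,\psi_2}(\cdot)$ and $u^{\psi_1,\varphi_2}(\cdot)$ satisfy $u(\cdot)=-R(\cdot)^{-1}\Delta^{u}(\cdot)=-R(\cdot)^{-1}\big(\Theta(\cdot)\widehat x^{u}(\cdot)+\nu(\cdot)\big)$, because the two block factorizations \eqref{E1Lambda1} and \eqref{E2Lambda2} are factorizations of the very same matrix $R(\cdot)$. Indeed, by definition the induced control $u^{\varphi_1,\psi_2}(\cdot)$ annihilates the quadratic remainder in \eqref{CS1}, i.e. $u_1^{\varphi_1,\psi_2}=\varphi_1(\widehat x^{\varphi_1,\psi_2},u_2^{\varphi_1,\psi_2})$ and $u_2^{\varphi_1,\psi_2}=\psi_2(\widehat x^{\varphi_1,\psi_2})$; since $E_1^\top$ is invertible, identity \eqref{EI:AE} then forces $u^{\varphi_1,\psi_2}+R^{-1}\Delta^{u^{\varphi_1,\psi_2}}=0$. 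In exactly the same way, $u^{\psi_1,\varphi_2}(\cdot)$ annihilates the remainder in \eqref{CS2}, and invertibility of $E_2^\top$ together with \eqref{IE:AE} forces $u^{\psi_1,\varphi_2}+R^{-1}\Delta^{u^{\psi_1,\varphi_2}}=0$.

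It then remains to invoke uniqueness. Substituting the feedback $u(\cdot)=-R(\cdot)^{-1}\big(\Theta(\cdot)\widehat x(\cdot)+\nu(\cdot)\big)$, with $\widehat x(\cdot)=\mathbb E[x(\cdot)\,|\,\mathcal F^0_\cdot]$, into the state equation \eqref{Sys} produces a linear CMF-SDE whose coefficients are bounded/integrable (Condition (I \& II) forces $R(\cdot)^{-1}$ bounded, cf. Assumption (H2)), so by Proposition 7.2 in Carmona and Zhu \cite{CZh-16}, exactly as in the proof of Lemma \ref{Lem:nonempty}(a), it admits a unique solution in $L^2_{\mathbb F}(\Omega;C(0,T;\mathbb R^n))$. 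By the previous paragraph both $x^{\varphi_1,\psi_2}(\cdot)$ and $x^{\psi_1,\varphi_2}(\cdot)$ solve this CMF-SDE, hence they and their $\mathcal F^0$-conditional expectations coincide, and therefore $u^{\varphi_1,\psi_2}(\cdot)=u^{\psi_1,\varphi_2}(\cdot)=:u^*(\cdot)$, which is \eqref{SameSP}; that $u^*(\cdot)\in\mathscr U[0,T]$ and $J(u^*(\cdot))=\Gamma/2+\widetilde J$ are then inherited from Theorem \ref{THM:EI-Sad}. I do not anticipate a genuine obstacle here: apart from the routine Schur-complement bookkeeping for the equivalence of the conditions and for reading off the common feedback from the two factorizations of $R(\cdot)$, the only slightly delicate point is the appeal to well-posedness and uniqueness of the conditional-mean-field closed-loop equation, and this is already available from the results cited above.
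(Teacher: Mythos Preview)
Your proposal is correct and follows essentially the same route as the paper: both induced controls are shown to satisfy $u=-R^{-1}(\Theta\widehat x+\nu)$ via \eqref{EI:AE} and \eqref{IE:AE}, whence the corresponding states solve one and the same linear CMF-SDE, and uniqueness (Proposition~7.2 in \cite{CZh-16}) yields \eqref{SameSP}. You merely add a little extra detail (the Schur-complement check that Condition~(I\,\&\,II) implies Conditions~(I) and~(II), which the paper leaves as ``obvious''), but the argument is otherwise identical.
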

	
	\begin{proof}
		Clearly, we only need to prove \eqref{SameSP}. According to \eqref{EI:AE} and \eqref{IE:AE} and also noticing the notation \eqref{Delta}, we have
		\[
		u^{\varphi_1,\psi_2}(\cdot) = -R(\cdot)^{-1} \Big\{ \Theta(\cdot) \mathbb E \big[ x^{\varphi_1,\psi_2}(\cdot)\, \big|\, \mathcal F^0_\cdot \big] +\nu(\cdot) \Big\}
		\]
		and
		\[
		u^{\psi_1,\varphi_2}(\cdot) = -R(\cdot)^{-1} \Big\{ \Theta(\cdot) \mathbb E \big[ x^{\psi_1,\varphi_2}(\cdot)\, \big|\, \mathcal F^0_\cdot \big] +\nu(\cdot) \Big\}.
		\]
		Here, both $x^{\varphi_1,\psi_2}(\cdot)$ and $x^{\psi_1,\varphi_2}(\cdot)$ satisfy the CMF-SDE:
		\[
		\left\{
		\begin{aligned}
			& \mathrm dx^*(t) = \bigg[ A(t)x^*(t) -B(t)R(t)^{-1}\Big\{ \Theta(t) \mathbb E\big[ x^*(t)\, \big|\, \mathcal F^0_t \big]\\
			& \qquad +\nu(t) \Big\} +b(t) \bigg]\, \mathrm dt +C(t)\, \mathrm dW(t) +\bar C(t)\, d\bar W(t),\\
			& x^*(0) =a
		\end{aligned}
		\right.
		\]
(compared with \eqref{Sys}). The uniqueness of the above equation (see Proposition 7.2 in \cite{CZh-16}) implies that $x^{\varphi_1,\psi_2}(\cdot) = x^{\psi_1,\varphi_2}(\cdot)$. Consequently, \eqref{SameSP} holds true.
	\end{proof}

\section{Application to a duopoly problem}\label{Sec:Duop}
	
In this section, we apply the theoretical results to a duopoly competition problem. 
	
\subsection{Formulation of duopoly problem}
	
To formulate the problem, we will modify the optimal control models proposed by Huang et al. \cite{HWW-10} and Wang et al. \cite{WWX-15} to a zero-sum game model. We notice that the dimensions of the involved equations will be changed from one-dimensional in \cite{HWW-10} and \cite{WWX-15} to two-dimensional.
	
Let us consider a market in which only two dominant firms exist. 
Similar to \cite{N-99},
%
%
the liability processes $L^u(\cdot) = (L^u_1(\cdot), L^u_2(\cdot))^\top$ of the two firms are assumed to be governed by
	\[
	-\mathrm d L^u(t) = \big[ B(t)u(t) + b(t) \big]\, \mathrm dt +C(t)\, \mathrm dW(t) +\bar C(t)\, \mathrm d\bar W(t),
	\]
	where the dimensions of $W(\cdot)$ and $\bar W(\cdot)$ are $d=2$ and $\bar d=1$ respectively, $u(t) = (u_1(t), u_2(t))^\top$ takes values in $\mathbb R^2$, 
\[
\begin{aligned}
& B(t) = \begin{pmatrix} b_{11}(t) & b_{12}(t) \\ b_{21}(t) & b_{22}(t) \end{pmatrix}, &&
b(t) = \begin{pmatrix} b_1(t) \\ b_2(t) \end{pmatrix}, \\
& C(t) = \begin{pmatrix} c_{11}(t) & c_{12}(t) \\ c_{21}(t) & c_{22}(t) \end{pmatrix}, &&
\bar C(t) =\begin{pmatrix} \bar c_1(t) \\ \bar c_2(t) \end{pmatrix}.
\end{aligned}
\]
	Here, $b_i(\cdot) >0$ represents the expected liability per unit time of Firm $i$ ($i=1,2$), $C(\cdot)$ and $\bar C(\cdot)$ characterize the liability risk, and $u_i(\cdot)$ denotes the control process of policymaker of Firm $i$ (i.e., Player $i$) and may be the rate of capital injection or withdrawal ($i=1,2$). When $b_{12}(\cdot) \neq 0$ or $b_{21}(\cdot) \neq 0$, it means that the corresponding firm's liability process may be influenced by the competitor firm's policies. 
	
Similar to \cite{HWW-10,N-99,WWX-15}, we assume that the firms are not allowed to invest in the risky assets due to certain supervisory regulations. In other words, the firms only invest in a money account with an interest rate $a(\cdot)>0$. Then, the cash balance processes (i.e., the state process) $x^u(\cdot) = (x^u_1(\cdot), x^u_2(\cdot))^\top$ of the two firms are given by
	\[
	x^u(t) = e^{\int_0^t a(s)\, \mathrm ds} \bigg[ x_0 -\int_0^t e^{-\int_0^s a(r)\, \mathrm dr}\, \mathrm dL^u(s) \bigg],
	\]
	where $x_0 = (x_{01}, x_{02})^\top$ is the initial endowments. Equivalently,
	\begin{equation}\label{E:Sys}
		\left\{
		\begin{aligned}
			& \mathrm dx^u(t) = \big[ a(t)x^u(t) +B(t)u(t) +b(t) \big]\, \mathrm dt\\
			& \qquad\qquad +C(t)\, \mathrm dW(t) +\bar C(t)\, \mathrm d\bar W(t),\\
			& x^u(0) =x_0. 
		\end{aligned}
		\right.
	\end{equation}

In reality, the cash balance process is generally not fully observable (see \cite{HaS-02} for example)
%
%
Instead, the prices of stocks from the two firms can be fully observed. Similar to \cite{BHM-07,HWW-10,WWX-15},
%
%
a linear factor model is adopted to characterize the price process of Firm $i$'s stock as follows ($i=1,2$):
	\[
	\left\{
	\begin{aligned}
		& \frac{\mathrm dS^u_i(t)}{S^u_i(t)} = \big[ f_i(t)x^u_i(t) +\bar h_i(t)\big ]\, \mathrm dt +K_i(t)\, \mathrm dW(t),\\
		& S^{u}_i(0) =S_{i0},
	\end{aligned}
	\right.
	\]
	where the cash balance process $x^u_i(\cdot)$ is the underlying factor and $K_i(t) = (k_{i1}(t), k_{i2}(t))$ presents the instantaneous volatilities. 
\[
y^u_i(\cdot) := \log S^u_i(\cdot) -\log S_{i0},\quad i=1,2.
\]
Then, $y^u(\cdot) =(y^u_1(\cdot),y^u_2(\cdot))^\top$ named the observation process satisfies 
	\begin{equation}\label{E:Obs}
		\left\{
		\begin{aligned}
			& \mathrm d y^u(t) = \big[ H(t)x^u(t) +h(t) \big]\, \mathrm dt +K(t)\, \mathrm dW(t),\\
			& y^u(0) =0,
		\end{aligned}
		\right.
	\end{equation}
where
\[
\begin{aligned}
& H(t)= \begin{pmatrix} f_1(t) & 0 \\ 0 & f_2(t) \end{pmatrix}, \\
& h(t)= \begin{pmatrix} h_1(t) \\ h_2(t) \end{pmatrix} =\begin{pmatrix}\bar h_1(t) - \frac 1 2 (k_{11}(t)^2 +k_{12}(t)^2) \\ \bar h_2(t) - \frac 1 2 (k_{21}(t)^2 +k_{22}(t)^2) \end{pmatrix}
\end{aligned}
\]
and
\[
K(t)= \begin{pmatrix} K_1(t) \\ K_2(t) \end{pmatrix} =\begin{pmatrix} k_{11}(t) & k_{12}(t) \\ k_{21}(t) & k_{22}(t) \end{pmatrix}.
\]
	The filtration generated by $y^u(\cdot)$ (equivalently, by $S^u(\cdot)$) reads $\mathbb F^u = \{\mathcal F^u_t\}_{t\in [0,T]}$ with
	\[
	\mathcal F^u_t = \sigma\big\{ S^u(s),\ 0\leq s\leq t \big\} =\sigma\big\{ y^u(s),\ 0\leq s\leq t \big\}.
	\]
	
	Now, an objective functional is given to the two policymakers:
	\begin{equation}\label{E:Cost}
		\begin{aligned}
			& \mathcal J(u(\cdot)) = \frac 1 2 \mathbb E\bigg\{ M_2 \big| x^u_2(T) -m_2 \big|^2\\
			& \quad -M_1 \big| x^u_1(T) -m_1 \big|^2 +\int_0^T \Big[ R_2(t) \big| u_2(t) -r_2(t) \big|^2\\
			& \quad -R_1(t) \big| u_1(t) -r_1(t) \big|^2 \Big]\, \mathrm dt \bigg\},
		\end{aligned}
	\end{equation}
	where, $m_i$ is the pre-set target of Firm $i$'s cash balance at the terminal time $T$, $r_i(\cdot)$ is the benchmark process of Policymaker $i$'s control, $M_i \geq 0$ and $R_i(\cdot)\geq 0$ are the related weighting factors ($i=1,2$). Policymaker $1$ wants to maximize $\mathcal J(\cdot)$, while Policymaker $2$ wants to minimize it, i.e., both policymakers hope that their firm's terminal cash balance is close to their pre-set targets, while the controls they adopt are not far from their benchmarks.

	Next, we will construct saddle points in the form of feedback laws for policymakers to address the above duopoly problem with partial observation.

	\subsection{Solution to duopoly problem}
	
	We begin with rewriting the objective functional \eqref{E:Cost} as
	\[
	\mathcal J(u(\cdot)) = J(u(\cdot)) +\mathring{\mathcal J},
	\]
where
\begin{equation}\label{E:ringJ}
\begin{aligned}
\mathring{\mathcal J} =\ & \frac 1 2 \bigg\{ M_2m_2^2  -M_1m_1^2\\
& +\int_0^T \Big[ R_2(t)r_2(t)^2 -R_1(t)r_1(t)^2 \Big]\, \mathrm dt \bigg\}
\end{aligned}
\end{equation}
is a constant and $J(u(\cdot))$ has the form of \eqref{J} with
\[
\begin{aligned}
& G = \begin{pmatrix} -M_1 & \\ & M_2 \end{pmatrix}, && g =\begin{pmatrix} M_1m_1 \\ -M_2m_2\end{pmatrix},\\
& R(\cdot) = \begin{pmatrix} -R_1(\cdot) & \\ & R_2(\cdot) \end{pmatrix}, && \rho(\cdot) = \begin{pmatrix} R_1(\cdot)r_1(\cdot) \\ -R_2(\cdot)r_2(\cdot) \end{pmatrix},
\end{aligned}
\]
$Q(\cdot) =0$, $S(\cdot) =0$ and $q(\cdot) =0$. Since $\mathring{\mathcal J}$ is a constant, the functionals $\mathcal J$ and $J$ are essentially equivalent.
	
	In order to apply the obtained theoretical results, we introduce the following
	
	\noindent{\bf Assumption (H4)} Let $i=1,2$. All the processes $a(\cdot)$, $B(\cdot)$, $b(\cdot)$, $C(\cdot)$, $\bar C(\cdot)$, $f_i(\cdot)$, $K(\cdot)$, $h(\cdot)$, $R_i(\cdot)$ and $r_i(\cdot)$ are deterministic and bounded. $K(\cdot)^{-1}$ exists and is also bounded. $R_i(\cdot) \gg 0$. Additionally, $M_i\geq 0$ and $m_i$ are constants.
	
	Obviously, under Assumption (H4), Assumption (H1) holds. Moreover, Condition (I), Condition (II) and Condition (I \& II) are the same and also hold true. 
	
Firstly, the Riccati equation \eqref{Riccati} and the ODE \eqref{ODE:p} read
	\begin{equation}\label{E:Riccati}
		\left\{
		\begin{aligned}
			& -\dot P = 2aP -PBR^{-1}B^\top P,\\
			& P(T) =G
		\end{aligned}
		\right.
	\end{equation}
	and
	\begin{equation}\label{E:aux-ODE}
		\left\{
		\begin{aligned}
			& -\dot p =ap +Pb -PBR^{-1}\big( B^\top p +\rho \big),\\
			& p(T) =g,
		\end{aligned}
		\right.
	\end{equation}
respectively.
%

Secondly, under Assumptions (H3) and (H4), the feedback laws \eqref{EI:varphi1}, \eqref{EI:psi2}, \eqref{IE:psi1} and \eqref{IE:varphi2} are reduced to
	\begin{equation}\label{E:Feed-Laws}
		\begin{aligned}
			& \varphi_1(t,x,u_2) = \psi_1(t,x)\\
			& \qquad\qquad = \frac{B_1(t)^\top \big[ P(t)x +p(t) \big]}{R_1(t)} +r_1(t),\\
			& \psi_2(t,x) = \varphi_2(t,x,u_1)\\
			& \qquad\qquad = -\frac{B_2(t)^\top \big[ P(t)x +p(t) \big]}{R_2(t)} +r_2(t),
		\end{aligned}
	\end{equation}
	for any $t\in[0,T]$, $x\in \mathbb R^2$ and $u_1, u_2 \in \mathbb R$. We notice that $R_{21}(\cdot) =0$ in the objective functional \eqref{E:Cost} leads to the special expressions of $\varphi_1$ and $\varphi_2$ that are  independent of $u_2$ and $u_1$ respectively. As a result of Theorem \ref{THM:EI-Sad}, Theorem \ref{THM:IE-Sad} and Corollary \ref{Cor:Same-Sad}, $(\varphi_1,\psi_2) = (\psi_1,\varphi_2)$ defined by \eqref{E:Feed-Laws} provides a saddle point in feedback law form.

	Thirdly, the induced controls are given by $u_1^*(\cdot) = \psi_1 (\cdot, \widehat x^*(\cdot) )$ and $u_2^*(\cdot) = \psi_2 (\cdot, \widehat x^*(\cdot) )$,
	where $\widehat x^*(\cdot)$ is the solution to the following filtering equation:
	\begin{equation}
		\left\{
		\begin{aligned}\label{E:KB}
			& \mathrm d\widehat x^* = \bigg\{ \bigg[ aI +\bigg( \frac{B_1B_1^\top}{R_1} -\frac{B_2B_2^\top}{R_2} \bigg) P \bigg] \widehat x^*\\
			& \qquad +\bigg( \frac{B_1B_1^\top}{R_1} -\frac{B_2B_2^\top}{R_2} \bigg) p\\
			& \qquad +B_1r_1 +B_2r_2 +b\bigg\}\, \mathrm dt\\
			& \qquad + \Big\{ C +\Sigma \big[ K^{-1}H \big]^\top \Big\}\, \mathrm d\widehat W,\\
			& \widehat x^*(0) =x_0
		\end{aligned}
		\right.
	\end{equation}
	(noticing $\Sigma(\cdot)$ is the solution to \eqref{KB:Riccati}).

Finally, the corresponding value of the objective functional is 
\begin{equation}
\mathcal J(u^*(\cdot)) = \frac {\Gamma}{2} +\widetilde J +\mathring{\mathcal J},
\end{equation}
	where the constants $\Gamma$, $\widetilde J$ and $\mathring{\mathcal J}$ are given by \eqref{Gamma}, \eqref{tildeJ} and \eqref{E:ringJ}, respectively.

\subsection{Numerical simulation}
	
We would like to continue doing a bit of numerical simulation to get some intuition.
	
Let $T =M_1 =M_2 =1.0$, $m_1 =m_2 =2.0$, $a(\cdot) =0.03$, $R_1(\cdot)=R_2(\cdot)=r_1(\cdot) =r_2(\cdot) =1.0$,
\[
\begin{aligned}
& B(\cdot) = \begin{pmatrix} 1.0 & 0.5 \\ 0.5 & 1.0 \end{pmatrix}, &&
b(\cdot) = \begin{pmatrix} 0.5 \\ 0.3 \end{pmatrix}, \\
& C(\cdot) = \begin{pmatrix} 1.0 & 0.5 \\ 0.5 & 1.0 \end{pmatrix}, &&
\bar C(\cdot) =\begin{pmatrix} 0.5 \\ 0.3 \end{pmatrix}, \\
& H(\cdot)= \begin{pmatrix} 1.0 & 0 \\ 0 & 1.0 \end{pmatrix}, && K(\cdot) =\begin{pmatrix} 1.0 & 0.5 \\ 0.5 & 1.0 \end{pmatrix}.
\end{aligned}
\]
Under the above setting, it can be directly verified that the coefficients satisfy the assumptions of Proposition A.2 in \cite{TYZh-20}, which ensures the unique solvability of both the Riccati equation \eqref{E:Riccati} and the ODE \eqref{E:aux-ODE}.

Next, we solve the Riccati equation \eqref{E:Riccati} numerically using the Euler method, and plot the corresponding curves for each component of it in Fig.~\ref{fig1}. Similarly, we also plot the curves for $p(\cdot)$ and $\Sigma(\cdot)$ in Fig.~\ref{fig2} and Fig.~\ref{fig3}, respectively. From Fig.~\ref{fig1} and Fig.~\ref{fig3}, it is observed that the curves of $P_{12}(\cdot)$ and $P_{21}(\cdot)$, as well as the curves of $\Sigma_{12}(\cdot)$ and $\Sigma_{21}(\cdot)$, are identical, which intuitively demonstrates the symmetry of $P(\cdot)$ and $\Sigma(\cdot)$.


Moreover, the numerical solution of the filtering equation \eqref{E:KB} can be obtained by virtue of the Euler-Maruyama method. A trajectory of $\widehat{x}^*(\cdot)$ is presented in Fig.~\ref{fig4}. We continue to plot trajectories of the induced controls $u_1^*(\cdot)$ and $u_2^*(\cdot)$ in Fig.~\ref{fig5}. From Fig.~\ref{fig5}, we observe that the trajectories of $u_1^*(\cdot)$ and $u_2^*(\cdot)$ exhibit opposite trends that are almost opposite in terms of their increases and decreases, which intuitively reflects the competitive relationship between the two firms.

	\begin{figure}
		\begin{center}
			\includegraphics[height=6.27cm, width=7cm]{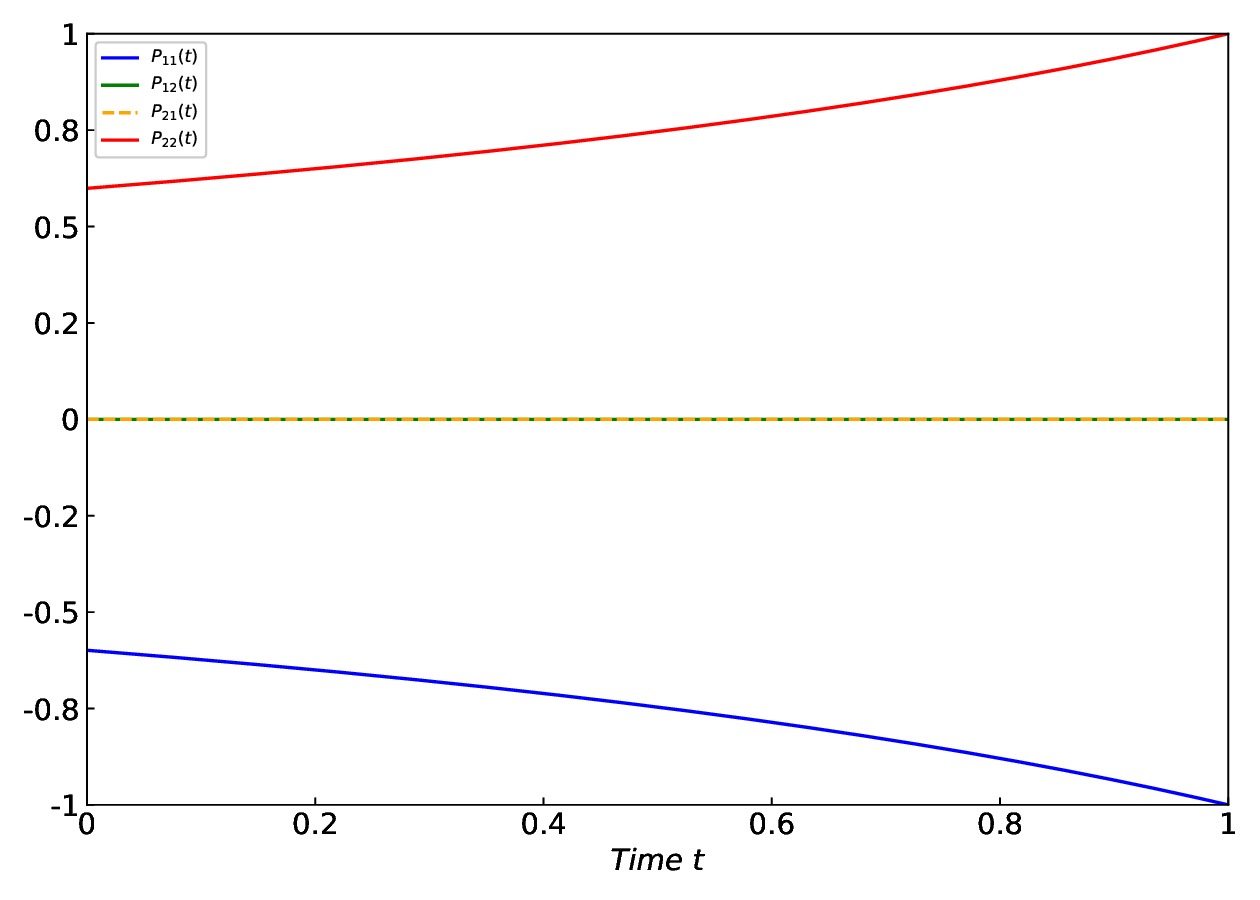}    
			\caption{Numerical solution of $P(\cdot)$.}  
			\label{fig1}                                 
		\end{center}                                 
	\end{figure}
	
	\begin{figure}
		\begin{center}
			\includegraphics[height=6.27cm, width=7cm]{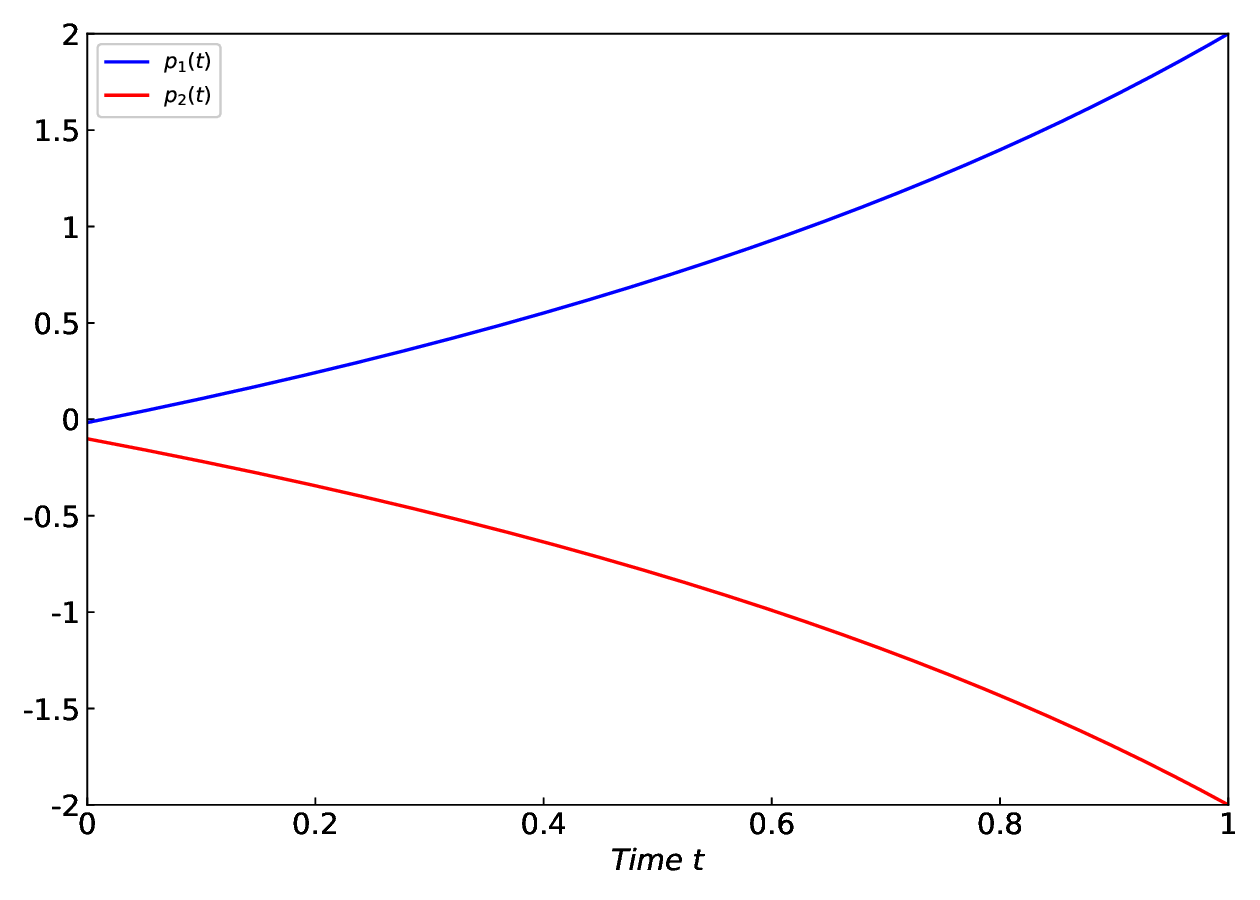}    
			\caption{Numerical solution of $p(\cdot)$.}  
			\label{fig2}                                 
		\end{center}                                 
	\end{figure}
	
	\begin{figure}
		\begin{center}
			\includegraphics[height=6.27cm, width=7cm]{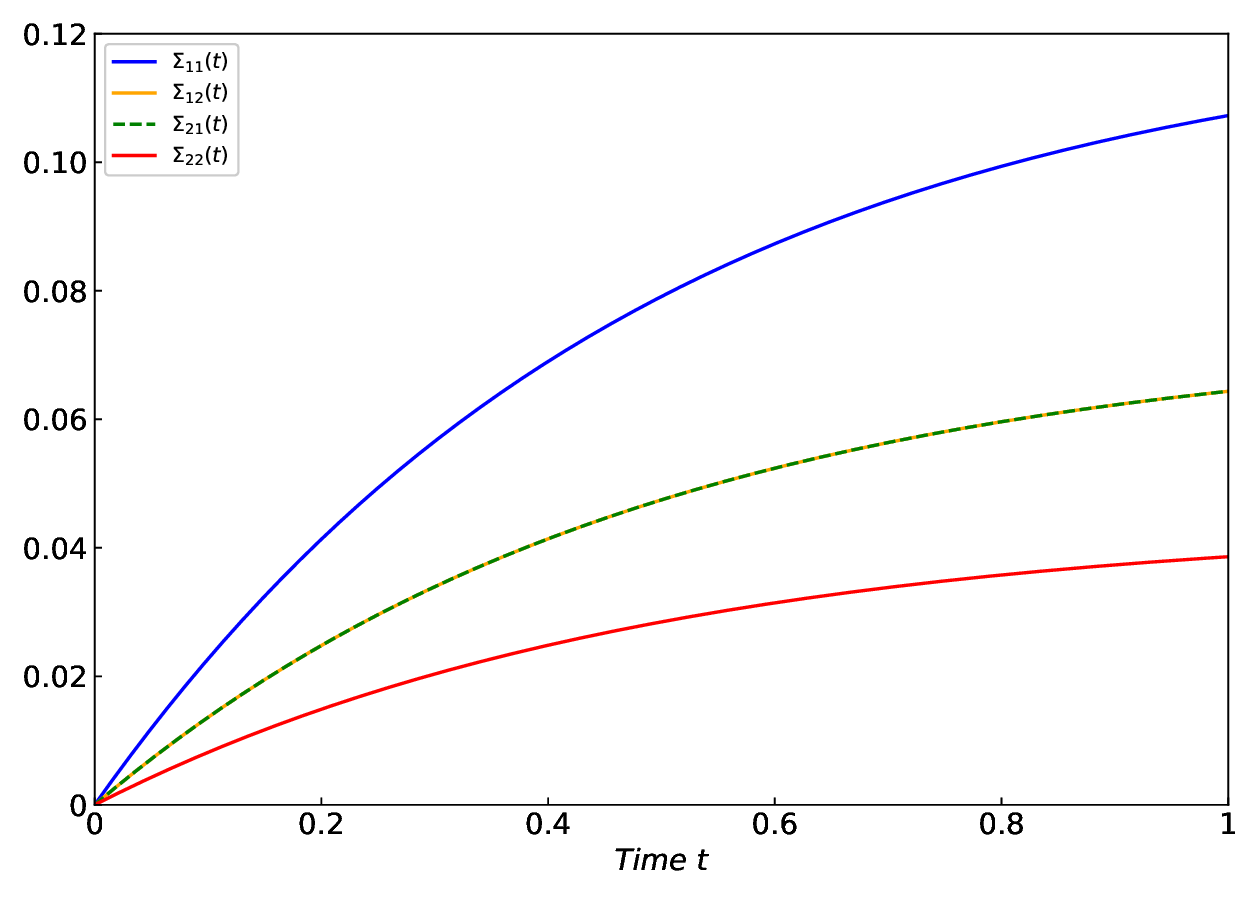}    
			\caption{Numerical solution of $\Sigma(\cdot)$.}  
			\label{fig3}                                 
		\end{center}                                 
	\end{figure}
	
	\begin{figure}
		\begin{center}
			\includegraphics[height=6.27cm, width=7cm]{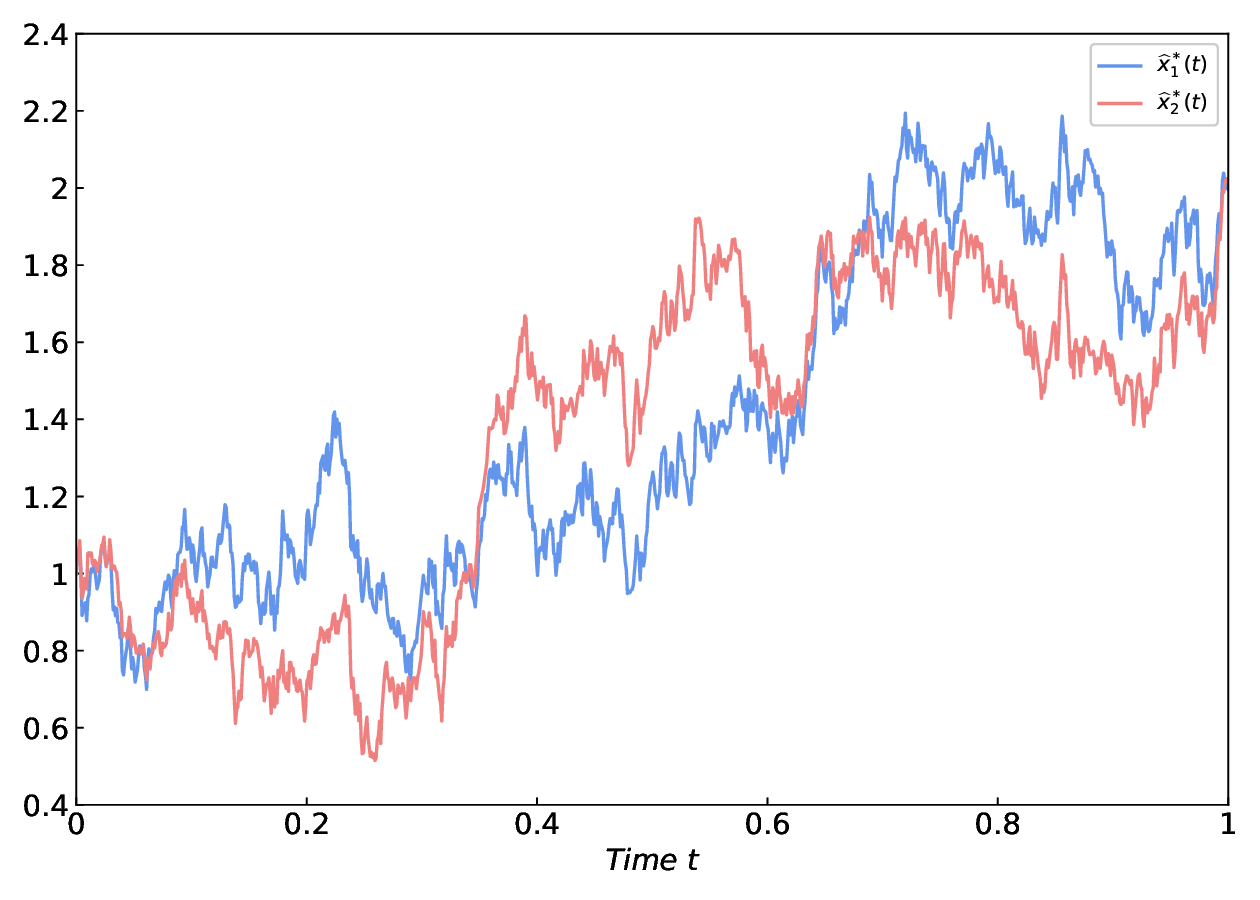}    
			\caption{One trajectory of $\widehat{x}^*(\cdot)$.}  
			\label{fig4}                                 
		\end{center}                                 
	\end{figure}
	
	\begin{figure}
		\begin{center}
			\includegraphics[height=6.27cm, width=7cm]{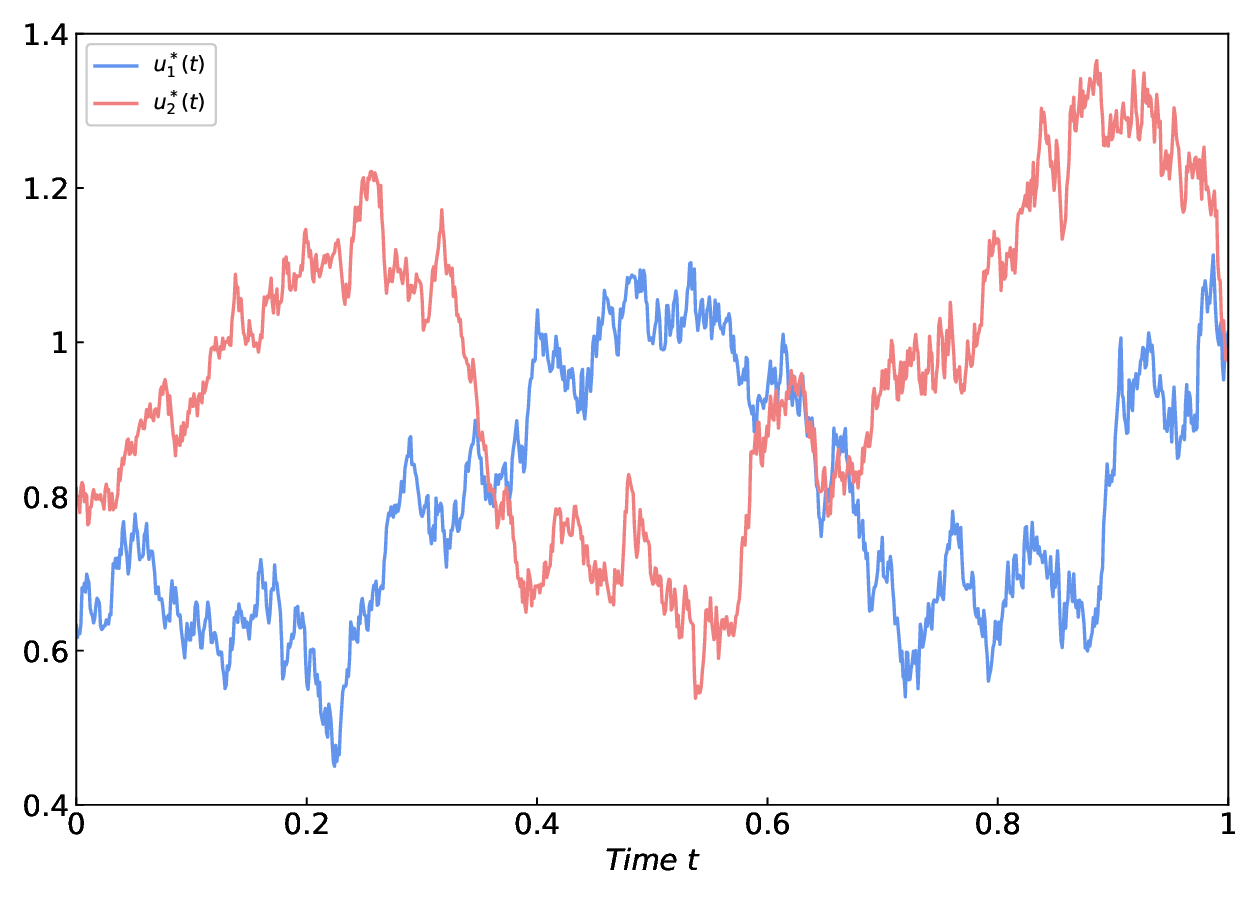}
			\caption{The induced pair of controls $(u_1^*(\cdot), u_2^*(\cdot))$.}  
			\label{fig5}                                 
		\end{center}                                 
	\end{figure}

\section{Conclusion and further work}\label{Sec:Conclu} In this paper, we obtain a feedback saddle point for an LQ zero-sum stochastic differential game when the players share a common observation equation. A natural generalization is to consider the case where the players have different observation equations. However, this raises new difficulties such as nonlinear filtering equations. We hope to report on the progress made in this direction in our future work.

\begin{ack} 
The authors would like to thank the editor, the associate editor, and the anonymous referees for their constructive and insightful comments, which help us improve the quality of this work.                              
This work was supported in part by the Natural Science Foundation of Shandong Province (ZR2024ZD35 and ZR2025MS63) and the National Natural Science Foundation of China (12271304).  
\end{ack}


\begin{thebibliography}{99}
	

\bibitem{BO-07}
F. Baghery and B. {\O}ksendal, \it A maximum principle for stochastic control with partial information, \sl Stoch. Anal. Appl., \rm 25 (2007), no. 3, 705--717. 

\bibitem{BEK-89}
J. S. Baras, R. J. Elliott and M. Kohlmann, \it The partially observed stochastic minimum principle, \sl SIAM J. Control Optim., \rm 27 (1989), no. 6, 1279--1292.
		
		\bibitem{B-18}
		A. Bensoussan, \sl Estimation and control of dynamical systems, \rm Interdisciplinary Applied Mathematics, 48. Springer, Cham, 2018.
		
		\bibitem{BHM-07}
		H. P. Boswijk, C. H. Hommes and S. Manzan, \it Behavioral heterogeneity in stock prices, \sl J. Econom. Dynam. Control, \rm 31 (2007), no. 6, 1938--1970.
		
		\bibitem{BL-08}
		R. Buckdahn and J. Li, \it Stochastic differential games and viscosity solutions of Hamilton-Jacobi-Bellman-Isaacs equations, \sl SIAM J. Control Optim., \rm 47 (2008), no. 1, 444--475.
		


\bibitem{CZh-16}
R. Carmona and X. Zhu, \it A probabilistic approach to mean field games with major and minor players, \sl Ann. Appl. Probab., \rm 26 (2016), no. 3, 1535--1580.


\bibitem{CIL-92}
M. G. Crandall, H. Ishii and P. L. Lions, \it User's guide to viscosity solutions of second order partial differential equations, \sl Bull. Amer. Math. Soc., \rm 27 (1992), no. 1, 1--67.

		\bibitem{EK-72}
		R. J. Elliott and N. J. Kalton, \it The existence of value in differential games, \sl Mem. Amer. Math. Soc., \rm 126 (1972), 1--23.


\bibitem{FS-89}
W. H. Fleming and P. E. Souganidis, \it On the existence of value functions of two-player, zero-sum stochastic differential games, \sl Indiana Univ. Math. J., \rm 38 (1989), no. 2, 293--314. 
		
		
		\bibitem{HaS-02}
		S. Haberman and J. Sung, \it Dynamic programming approach to pension funding: the case of incomplete state information, \sl Astin Bull., \rm 32 (2002), no. 1, 129--142.
		
		
		\bibitem{HWW-10}
		J. Huang, G. Wang and Z. Wu, \it Optimal premium policy of an insurance firm: full and partial information, \sl Insurance Math. Econom., \rm 47 (2010), no. 2, 208--215.
		
		
\bibitem{HWW-16}
J. Huang, S. Wang and Z. Wu, \it Backward mean-field linear-quadratic-Gaussian (LQG) games: full and partial information, \sl  IEEE Trans. Automat. Control, \rm 61 (2016), no. 12, 3784--3796.







\bibitem{M-09}
Q. Meng, \it A maximum principle for optimal control problem of fully coupled forward-backward stochastic systems with partial information, \sl Sci. China Ser. A, \rm 52 (2009), no. 7, 1579--1588.

\bibitem{NWY-22}
T. Nie, F. Wang and Z. Yu, \it Maximum principle for general partial information nonzero sum stochastic differential games and applications, \sl Dyn. Games Appl., \rm 12 (2022), no. 2, 608--631.
		
		\bibitem{N-99}
		R. Norberg, \it Ruin problems with assets and liabilities of diffusion type, \sl Stochastic Process. Appl., \rm 81 (1999), no. 2, 255--269.
		
\bibitem{SC-16}
N. \c{S}en and P. Caines, \it Mean field game theory with a partially observed major agent, \sl SIAM J. Control Optim., \rm  54 (2016), no. 6, 3174--3224.
		
		\bibitem{S-21}
		J. Sun, \it Two-person zero-sum stochastic linear-quadratic differential games, \sl SIAM J. Control Optim., \rm 59 (2021), no. 3, 1804--1829.


\bibitem{T-98}		
S. Tang, \it The maximum principle for partially observed optimal control of stochastic differential equations, \sl SIAM J. Control Optim., \rm 36 (1998), no. 5, 1596--1617.
		
		\bibitem{TYZh-20}
		R. Tian, Z. Yu and R. Zhang, \it A closed-loop saddle point for zero-sum linear-quadratic stochastic differential games with mean-field type, \sl Systems Control Lett., \rm 136 (2020), 104624, 18 pp.
		
		
		
		
\bibitem{WWX-15}
G. Wang, Z. Wu and J. Xiong, \it A linear-quadratic optimal control problem of forward-backward stochastic differential equations with partial information, \sl IEEE Trans. Automat. Control, \rm 60 (2015), no. 11, 2904--2916.

\bibitem{WWX-18}
G. Wang, Z. Wu and J. Xiong, \sl An introduction to optimal control of FBSDE with incomplete information, \rm SpringerBriefs in Mathematics, Springer, Cham, 2018.

\bibitem{WXX-17}
G. Wang, H. Xiao and G. Xing, \it An optimal control problem for mean-field forward-backward stochastic differential equation with noisy observation, \sl Automatica, \rm 86 (2017), 104--109.

\bibitem{WX-24}
G. Wang and Z. Xing, \it Robust optimal control of biobjective linear-quadratic system with noisy observation, \sl IEEE Trans. Automat. Control, \rm 69 (2024), no. 1, 303--308.

\bibitem{WY-12}
G. Wang and Z. Yu, \it A partial information non-zero sum differential game of backward stochastic differential equations with applications, \sl Automatica, \rm 48 (2012), no. 2, 342--352.
		
		\bibitem{W-68}
		W. M. Wonham, \it On the separation theorem of stochastic control, \sl SIAM J. Control, \rm 6 (1968), 312--326.

\bibitem{WZh-18}
Z. Wu and Y. Zhuang, \it Linear-quadratic partially observed forward-backward stochastic differential games and its application in finance, \sl Appl. Math. Comput., \rm 321 (2018), 577--592.


\bibitem{XZhZh-19}
J. Xiong, S. Zhang and Y. Zhuang, \it A partially observed non-zero sum differential game of forward-backward stochastic differential equations and its application in finance, \sl Math. Control Relat. Fields, \rm  9 (2019), no. 2, 257--276.

		
		\bibitem{Y-15}
		Z. Yu, \it An optimal feedback control-strategy pair for zero-sum linear-quadratic stochastic differential game: the Riccati equation approach, \sl SIAM J. Control Optim., \rm 53 (2015), no. 4, 2141--2167.
			
		
		\bibitem{Zh-17}
		J. Zhang, \sl Backward stochastic differential equations: From linear to fully nonlinear theory, \rm Springer, New York, 2017.
		
		
\bibitem{ZhXS-21}
S. Zhang, J. Xiong and J. Shi, \it A linear-quadratic optimal control problem of stochastic differential equations with delay and partial information, \sl Systems Control Lett., \rm 157 (2021), 105046, 7 pp.
		
	\end{thebibliography}
\end{document}